\documentclass[english]{article}
\usepackage[T1]{fontenc}
\usepackage[latin9]{inputenc}
\usepackage{mathrsfs}
\usepackage{amsmath}
\usepackage{amsthm}
\usepackage{amssymb}
\usepackage{graphicx}

\makeatletter
\newcommand{\lyxaddress}[1]{
\par {\raggedright #1
\vspace{1.4em}
\noindent\par}
}
\theoremstyle{plain}
\newtheorem{thm}{\protect\theoremname}[section]
  \theoremstyle{definition}
  \newtheorem{problem}[thm]{\protect\problemname}
  \theoremstyle{plain}
  \newtheorem{cor}[thm]{\protect\corollaryname}
  \theoremstyle{plain}
  \newtheorem{prop}[thm]{\protect\propositionname}

\usepackage{babel}

\makeatother

\usepackage{babel}
  \providecommand{\corollaryname}{Corollary}
  \providecommand{\problemname}{Problem}
  \providecommand{\propositionname}{Proposition}
\providecommand{\theoremname}{Theorem}

\begin{document}
\global\long\def\d#1{\,{\rm d}#1}

\global\long\def\R{\mathbb{R}}

\global\long\def\C{\mathbb{C}}

\global\long\def\Z{\mathbb{Z}}

\global\long\def\N{\mathbb{N}}

\global\long\def\Q{\mathbb{Q}}

\global\long\def\T{\mathbb{T}}

\global\long\def\F{\mathbb{F}}

\global\long\def\vp{\varphi}

\global\long\def\Sph{\mathbb{S}}

\global\long\def\sub{\subset}

\global\long\def\one{\mathbbm1}

\global\long\def\vol#1{\text{vol}\left(#1\right)}

\global\long\def\EE{\mathbb{E}}

\global\long\def\sp{{\rm sp}}

\global\long\def\iprod#1#2{\langle#1,\,#2\rangle}

\global\long\def\uball{B_{2}^{n}}

\global\long\def\conv#1{{\rm conv}\left(#1\right)}

\global\long\def\met#1{{\rm M}\left(#1\right)}

\global\long\def\mfloat#1#2{{\rm M}_{#1}\left(#2\right)}

\global\long\def\mwfloat#1#2#3{{\rm M}_{#1}\left(#2,#3\right)}

\global\long\def\supp#1{{\rm supp}\left(#1\right)}

\global\long\def\eps{\varepsilon}

\global\long\def\PP{\mathbb{P}}

\global\long\def\vein#1{{\rm vein}\left(#1\right)}

\global\long\def\fvein#1{{\rm vein^{*}}\left(#1\right)}

\global\long\def\bfvein#1{{\rm \mathbf{vein^{*}}}\left(\mathbf{#1}\right)}

\global\long\def\ill#1{{\rm ill\left(#1\right)}}

\global\long\def\fill#1{{\rm ill^{*}\left(#1\right)}}

\global\long\def\ovr#1{{\rm ovr\left(#1\right)}}

\global\long\def\norm#1{\left\Vert #1\right\Vert }

\global\long\def\proj{{\rm Pr}}

\global\long\def\ra{\Rightarrow}

\global\long\def\H#1#2{H^{+}\left(#1,\,#2\right)}

\title{\textsc{John's position is not good for approximation }}

\author{Han Huang }
\maketitle

\lyxaddress{Department of Mathematics, University of Michigan, Ann Arbor, MI
48109-1043.}
\begin{abstract}
A well-known consequence of John's theorem states that any symmetric
convex body $K\sub\R^{n}$ in John's position can be approximated
by a polytope $P$ with a polynomial number of facets in $n$, so
that $P\sub K\sub\sqrt{n}P$. This results extends to the non-symmetric
case if the homothety ratio grows to $n$. In this note, we study
how well this result holds in the non-symmetric case, if the homothety
ratio is reduced below $n$. We prove the following: For $R=o\left(n\right)$
and a sufficiently large $n$, there exists a convex body $K\subset\mathbb{R}^{n}$
in John's position for which there is no polytope $P$ with a polynomial
number of facets, such that $K\subset P\subset RK$. Moreover, for
$R=O\left(\sqrt{n}\right)$, there exists a convex body for which
a polytope with an exponential number of facets is needed. 
\end{abstract}

\section{Introduction}

One of the most natural questions in convex geometry is how well can
a convex body in $\R^{n}$ (i.e., compact, convex set with non-empty
interior) be approximated by polytopes with as few facets (or vertices)
as possible. How closely a polytope approximates a convex body can
be measured in different ways. In this paper, we are interested in
the Banach-Mazur distance which, for (origin) symmetric convex bodies
$K,L\sub\R^{n}$, is defined by 
\[
d_{BM}(K,L):=\inf\{r\ge1\,:\,\exists T\in GL_{n}(\mathbb{R})\text{ such that }K\subset TL\subset rK\}.
\]
Denote the Euclidean unit ball in $\R^{n}$ by $B_{2}^{n}$. The following
lower bound was proven independently (and using different methods)
in \cite{BaranyFuredi}, \cite{BourganLindenstraussMilman}, \cite{CarlPajor}
and \cite{Gluskin}. For any polytope $P$ with $m$ facets, one has
$d_{BM}(B_{2}^{n},P)\ge c\sqrt{\frac{n}{\log(\frac{m}{n})}}$. Recently,
Barvinok \cite{Barvinok} showed that for any symmetric convex body
$K\subset\mathbb{R}^{n}$, one can find a polytope $P$ with a number
of facets, $m$, which is at least polynomial in $n$, such that $d_{BM}(K,P)=O(\sqrt{\frac{n\log(n)}{\log(m)}})$.\\

For the non-symmetric case, one has to modify the definition of the
Banach-Mazur distance: For convex bodies $K,L\sub\R^{n}$, the Banach-Mazur
distance $d_{BM}(K,L)$ is defined by 
\[
d_{BM}(K,L):=\inf\left\{ \begin{array}{c}
r\ge1\,:\,\exists T\in GL_{n}(\mathbb{R})\text{ and }x,y\in\mathbb{R}^{n}\\
\text{ such that }K-x\subset T(L-y)\subset r(K-x)
\end{array}\right\} .
\]
The choice of the origin $x$ of $K$ in this definition is crucial.
For a symmetric convex body, classical choices such as the center
of mass, the center of the John ellipsoid, and the Santaló point,
all coincide with the center of symmetry. However, this is not the
case for a general convex body, a fact which makes the choice of origin
an obstacle.

The following results make use of the center of mass as the origin.
The first result, by Szarek \cite{Szarek}, states that for any convex
body in $\mathbb{R}^{n}$, there exists a polytope with either $m$
facets or $m$ vertices, such that $d_{BM}(K,P)\le\frac{n}{\log(\frac{m}{n})}$.
Using a random method, Brazitikos, Chasapis, and Hioni obtain an upper
bound of the order of $\frac{n}{\sqrt{\log(\frac{m}{n})}}$, where
$m$ is the number of vertices.

In this paper, we consider another natural choice of an origin. Namely,
we fix the origin of a convex body $K\sub\R^{n}$ as the center of
its John ellipsoid. The John ellipsoid associated to a convex body
$K\sub\R^{n}$ is defined as the (unique) maximal volume ellipsoid
contained in $K$. We say that a convex body is in John's position
if its maximal volume ellipsoid is the unit Euclidean ball $B_{2}^{n}$.
For any convex body $K$, there exists an affine transformation $T$
such that $TK$ is in John's position. If we consider convex bodies
in John's position, then the origin is the center of their John ellipsoids.
John's position has many consequences that have been useful for solving
many problems, see e.g., \cite{AGA} and the reference therein. For
example, the only known proof showing that a symmetric convex body
$K$ can be approximated by a polytope $P$ with polynomial number
of facets in $n$ so that $d_{BM}(K,P)=O(\sqrt{n})$ relies on John's
position. More precisely, the proof uses the existence of $m=O\left(n^{2}\right)$
contact points $\{x_{i}\}_{i=1}^{m}$ (see definition below) that
form an identity decomposition. The polytope is then defined by $P:=\{x\in\mathbb{R}^{n}\,:\,\left|\langle x,x_{i}\rangle\right|\le1\quad\forall i\le m\}$.
In particular, $P$ satisfies 
\[
B_{2}^{n}\subset K\subset P\subset\sqrt{n}B_{2}^{n}.
\]
Thus, $d_{BM}(P,K)\le\sqrt{n}$. We remark that Barvinok's result
\cite{Barvinok} also relies on contact points. In the non-symmetric
case, however, the above construction can only imply

\[
B_{2}^{n}\subset K\subset P\subset nB_{2}^{n},
\]
which shows that $d_{BM}(K,P)\le n$.\\
 \\
 In this paper we investigate the following problem. 
\begin{problem}
\label{prob:main}Let $R=o(n)$, and $K\subset\mathbb{R}^{n}$ be
a convex body in John's position. Is there a polytope $P$ whose number
of facets is polynomial in $n$, such that 
\[
K\subset P\subset RK.
\]
\end{problem}

We prove the following main theorem. 
\begin{thm}
\label{thm:MAIN}For a sufficiently large $n$ and for any $c_{0}\sqrt{n}\le R\le c_{1}n$,
there exists a convex body $K\subset\mathbb{R}^{n}$ whose John's
ellipsoid is centered at the origin, and such that any polytope $P$
satisfying 
\[
K\subset P\subset RK,
\]
has at least $\exp(C\log(\frac{R^{2}}{n})\frac{n}{R^{2}}n)$ facets,
where $c_{0},\,c_{1},\,C>0$ are some universal constants. 
\end{thm}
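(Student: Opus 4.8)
The plan is to construct $K$ as a carefully chosen perturbation of the simplex-like body whose John ellipsoid is $B_2^n$, so that the "outer" boundary of $K$ in one special direction is pushed out to distance $\sim n$ while the body remains tightly pinched around $B_2^n$ in all transverse directions. The natural candidate is a body of the form $K = \conv{B_2^n \cup \{t e_{n+1}\}}$-type spindle (after a rescaling so that John's position holds), but more usefully a truncated cone: let $K$ be the convex hull of a base that is a $(n-1)$-dimensional copy of $B_2^{n-1}$ slightly shifted, together with a far apex, arranged so the maximal inscribed ellipsoid is exactly $B_2^n$. The key geometric feature we need is that $RK$, for $R = o(n)$, still has a "cap" near the apex direction that is long and thin: it reaches out to distance $\sim R$ along $e_n$ but has width only $O(1)$ (or $O(R/n)$) transversally, because $K$ itself already has that shape. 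Covering this thin cap from outside by halfspaces $\{\iprod{x}{u_i} \le 1\}$ — which is what a polytope $P$ with $K \subset P \subset RK$ must do — forces many facets, since each halfspace can only "see" a small solid-angle portion of the cap's lateral surface.

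First I would fix the construction precisely and verify the John condition: one must check that $B_2^n$ is the maximum-volume inscribed ellipsoid, which by John's theorem reduces to exhibiting contact points $x_i \in \partial K \cap \partial B_2^n$ and weights $c_i > 0$ with $\sum c_i x_i = 0$ and $\sum c_i x_i \otimes x_i = I_n$ (in the non-symmetric John theorem the first-moment condition is the one that pins down the center). This is a routine but essential computation for the chosen $K$. Second, I would identify the "bad region": show that $RK$ contains a subset $\Sigma$ which is, up to constants, a cylinder or cone of length $\sim R$ and transverse radius $\sim R^2/n$ (this ratio is what makes $R = o(n)$ the relevant threshold — when $R \sim n$ the region is round and few facets suffice, matching the known positive result), and that $K$ comes within distance $O(1)$ of the lateral boundary of $\Sigma$. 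Third, the core counting step: any halfspace $H = \{\iprod{x}{u} \le 1\}$ containing $K$ can contain at most a small portion of $\partial \Sigma$ — quantitatively, the set of boundary points of $\Sigma$ lying within the required tolerance of $\partial H$ has normalized surface measure at most $\exp\!\big(-c\,\frac{n}{R^2}\log\frac{R^2}{n}\big)$ by a standard spherical-cap / volumetric estimate on the sphere $\mathbb{S}^{n-2}$ of the transverse directions. Dividing the whole lateral boundary by this quantity yields the lower bound $\exp\!\big(C \log(\tfrac{R^2}{n})\tfrac{n}{R^2}\, n\big)$ on the number of facets.

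The step I expect to be the main obstacle is the third one: making rigorous the claim that a single outer halfspace can only account for an exponentially small fraction of the thin cap. The difficulty is that a halfspace is a global object — its normal $u$ can be nearly aligned with the cap's axis $e_n$, in which case it touches $K$ near the apex and slices off a large spherical cap's worth of lateral directions at once. One must show that such "axial" halfspaces, while they each kill many lateral directions, fail to stay inside $RK$ along the sides (they cut into the body), so the only admissible halfspaces are the "transverse" ones whose normals are nearly orthogonal to $e_n$; for those the spherical-cap bound applies cleanly. Formalizing this dichotomy — essentially a case analysis on the angle between $u$ and $e_n$, combined with the two constraints $K \subset H$ and $H \cap \Sigma \subset RK$ — is where the real work lies, and it is also where the precise exponent $\log(\tfrac{R^2}{n})\tfrac{n}{R^2}$ must be extracted, via a covering-number estimate for $\mathbb{S}^{n-2}$ by caps of angular radius $\sim \sqrt{n}/R$.

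A secondary technical point is handling vertices versus facets and the affine invariance: since John's position fixes the origin, we may not apply an arbitrary affine map, so the construction must be genuinely rigid, and one should double-check that no clever non-axial placement of $P$ (using, say, many vertices clustered near the apex together with few facets) evades the bound — but since the statement only counts facets and the obstruction is the lateral surface of $RK$, which $P \subset RK$ must avoid crossing, the facet count is what is forced, and vertices are irrelevant to the argument.
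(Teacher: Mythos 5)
Your construction fails before the counting step. Take the body you propose in its John position: a cone of revolution with apex at height $n$ on the axis and base an $(n-1)$-dimensional ball of radius $\approx 1$ tangent to $B_2^n$ (John's condition forces exactly these proportions, as for the simplex). This body \emph{can} be approximated within factor $R$ by a polytope with only $O(n)$ facets as soon as $R\ge C\sqrt{n}$: replace the base ball by a circumscribed cube (or, for smaller constants $c_0$, any polytope $Q$ with $B_2^{n-1}\subset Q\subset \tfrac{R}{2}B_2^{n-1}$ and $C_{c_0}n$ facets) and let $P$ be the cone with the same apex over this polytopal base, together with the base halfspace. The cross-section of $P$ at height $t$ is $r(t)Q$, where $r(t)$ is the cross-sectional radius of $K$, while the cross-section of $RK$ (dilation about the John center) at the same height is a ball of radius $r(t)\cdot\frac{Rn-t}{n-t}\ge (1-o(1))R\,r(t)$; hence $K\subset P\subset RK$ with $2n-1$ facets. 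In particular the ``thin cap of transverse radius $R^2/n$'' does not exist: the apex of $K$ lies at distance $\approx R$ from $\partial(RK)$ in \emph{every} direction, so near the apex a handful of facets suffice, and transversally the problem reduces to approximating a Euclidean ball at scale $R\ge\sqrt n$, which is cheap. This is exactly the failure you flagged as the main obstacle: for a rotationally symmetric body, a single tangent halfspace serves a huge cap of lateral directions once factor-$R$ slack is allowed, so the solid-angle heuristic collapses. (There is also an arithmetic inconsistency: dividing the lateral surface by caps of measure $\exp(-c\frac{n}{R^2}\log\frac{R^2}{n})$ would give only $\exp(c\frac{n}{R^2}\log\frac{R^2}{n})$ facets, a factor $n$ short in the exponent of the claimed bound, while caps of angular radius $\sqrt n/R$ would give $\exp(cn\log\frac{R^2}{n})$, which overshoots it.)

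What is missing is not one thin direction but exponentially many well-separated shallow cuts, each of which must be served by some facet, plus a mechanism showing one facet can serve only $O(R)$ of them. The paper's $K$ is the simplex $\Delta_n$ intersected with $m=\exp(c\frac{n^2}{R^2}\log\frac{R^2}{n})$ halfspaces $\{\iprod{x}{y_i}\le 1\}$, where $y_i$ is a slight upward tilt (toward a fixed contact point $u_1$) of a unit vector $v_{I_i}$ obtained by averaging a random $k$-subset of the vertices of the opposite facet, with $k\approx (n/R)^2$; random subsets with pairwise intersections below $k/2$ give $\iprod{v_{I_i}}{v_{I_j}}\le 3/4$. The witnesses $x_i$ are rescaled downward tilts $C_0 v_{I_i}^{\downarrow}$, which satisfy $\iprod{x_i}{y_i}=1$, $\iprod{x_i}{y_j}\le 0$ for $j\ne i$, and $\iprod{x_i}{y}\le\frac{1}{2R}$ for all $y\in\Delta_n^{\circ}$. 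Then a polar pigeonhole (each facet normal of $P$ lies in $K^{\circ}=\conv{\{y_i\},\Delta_n^{\circ}}$, and writing it as a convex combination shows it can satisfy $R\iprod{x_i}{w}\ge1$ for at most $2R$ indices $i$) forces at least $m/(2R)$ facets. To salvage your plan you would have to replace the smooth rotationally symmetric cap by such a discrete, nearly decorrelated family of cuts; a surface-measure argument on a single body of revolution cannot yield the theorem.
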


${\bf Remark:}$ 
\begin{enumerate}
\item Notice that the inclusion relations are invariant under invertible
linear transformations. We may assume that the body $K$ is in John's
position. 
\item For each $R\in\left[c_{0}\sqrt{n},\,c_{1}n\right]$, the body $K$
which is constructed in the Theorem \ref{thm:MAIN} is a polytope.
Moreover, for any polytope $P$ satisfying $K\subset P\subset RK$,
we have: 
\[
\text{number of facets of P}\ge\frac{c}{n}\cdot\left(\text{number of facets of }K\right).
\]
\end{enumerate}
A direct consequence of Theorem \ref{thm:MAIN} answers Problem \ref{prob:main}
in the negative: 
\begin{cor}
Let $R_{n}\rightarrow+\infty$ be a positive increasing sequence that
satisfies $\lim_{n\rightarrow\infty}\frac{R_{n}}{n}\rightarrow0$.
For any constant $k>0$, there exists a convex body $K\subset\mathbb{R}^{n}$
in John's position for a sufficiently large $n$ such that there is
no polytope that has at most $n^{k}$ facets and satisfies 
\[
K\subset P\subset R_{n}K.
\]
\end{cor}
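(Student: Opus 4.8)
The plan is to derive the Corollary directly from Theorem~\ref{thm:MAIN} by a routine asymptotic argument. Fix the constant $k>0$. Since $R_n\to\infty$ and $R_n/n\to 0$, for every sufficiently large $n$ we have $c_0\sqrt n\le R_n\le c_1 n$ is \emph{not} automatic on the left end, so the first thing I would do is reduce to the case $R_n\ge c_0\sqrt n$. If $R_n<c_0\sqrt n$ for infinitely many $n$, then in particular $R_n<c_0\sqrt n\le c_1 n$, and one can simply replace $R_n$ by $\tilde R_n:=\max(R_n,c_0\sqrt n)$; since $K\subset P\subset R_nK$ implies $K\subset P\subset\tilde R_n K$, a lower bound on the number of facets for homothety ratio $\tilde R_n$ is also a lower bound for ratio $R_n$. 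So without loss of generality $c_0\sqrt n\le R_n\le c_1 n$ for all large $n$, and Theorem~\ref{thm:MAIN} applies with $R=R_n$.

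Next I would unwind the bound $\exp\!\big(C\log(\tfrac{R^2}{n})\tfrac{n}{R^2}n\big)$ given by the theorem. Write $t_n:=R_n^2/n$, so $1\le t_n = o(n)$ (indeed $t_n\le c_1^2 n$ and $t_n\ge c_0^2$), and the number of facets of any admissible $P$ is at least $\exp\!\big(C\,(\log t_n)\,t_n^{-1}\,n\big)$. I want to show this exceeds $n^k$ for all large $n$, i.e.\ that
\[
C\,\frac{\log t_n}{t_n}\,n \;>\; k\log n
\]
eventually. The left side is $\asymp n/t_n$ up to the $\log t_n$ factor; since $t_n\le c_1^2 n$ we have $n/t_n\ge c_1^{-2}$, which alone is not enough, so the point is that $R_n=o(n)$ forces $t_n=o(n)$, hence $n/t_n\to\infty$. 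Then $\frac{\log t_n}{t_n}n\ge \frac{n}{t_n}\cdot\log t_n$; if $t_n\to\infty$ this is at least $(n/t_n)\log 2\to\infty$ faster than any fixed multiple of $\log n$ — but I must be careful, because $n/t_n$ could in principle grow slower than $\log n$. Here is where $R_n=o(n)$ is used quantitatively: $n/t_n = n^2/R_n^2$, and $R_n=o(n)$ gives $n/R_n\to\infty$, so $n^2/R_n^2\to\infty$; combined with $t_n\ge c_0^2$, we get $\log t_n\ge\log(c_0^2)$ bounded below (if $c_0\ge1$; otherwise $t_n$ could be small, in which case $\log t_n/t_n$ is bounded below by a positive constant since $t_n\le c_1^2n$ forces nothing small — actually for bounded $t_n$ the quantity $(\log t_n)/t_n$ is bounded below away from $0$ only if $t_n$ is bounded away from $1$). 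To sidestep this nuisance I would simply note: either $t_n\le T_0$ for a fixed threshold along a subsequence, in which case $(\log t_n)/t_n\ge c'>0$ there and the exponent is $\ge c'Cn\gg k\log n$; or $t_n\to\infty$, in which case $(\log t_n)/t_n\cdot n\ge (\log t_n)\cdot(n/t_n)$ and since $n/t_n\to\infty$ this is $\gg\log n$ as well. Either way the number of facets exceeds $\exp(\omega(\log n))=n^{\omega(1)}>n^k$ for $n$ large.

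Finally I would assemble this into a clean statement: for each large $n$ let $K=K_n\subset\R^n$ be the body produced by Theorem~\ref{thm:MAIN} with $R=R_n$ (wlog in John's position, by Remark~1); then any polytope $P$ with $K\subset P\subset R_n K$ has more than $n^k$ facets, which is exactly the claim. The main obstacle is entirely bookkeeping: ensuring the hypothesis $c_0\sqrt n\le R_n\le c_1 n$ of the theorem is met (handled by the $\max$ trick above, together with the fact that $R_n\to\infty$ and $R_n/n\to0$ put $R_n$ below $c_1 n$ for large $n$), and then checking that $R_n=o(n)$ is \emph{exactly} the quantitative input needed to push the exponent $C\log(R^2/n)\,\frac{n}{R^2}\,n$ past $k\log n$. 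There is no genuine difficulty beyond this asymptotic manipulation; the substance is all in Theorem~\ref{thm:MAIN}.
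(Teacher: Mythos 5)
Your route is the intended one: the paper offers no separate argument for this corollary and treats it as immediate from Theorem \ref{thm:MAIN}, and your reduction (pass to $\tilde R_n=\max(R_n,c\sqrt n)$ using $R_nK\subset\tilde R_nK$, then check that the exponent $C\log(\tfrac{R^2}{n})\tfrac{n^2}{R^2}$ beats $k\log n$) is exactly the bookkeeping being left to the reader. Two repairs are needed, one cosmetic and one substantive. The cosmetic one: in the bounded-$t_n$ branch of your dichotomy you need $(\log t_n)/t_n$ bounded below by a positive constant, which requires $t_n=\tilde R_n^2/n$ bounded away from $1$, not merely $t_n\ge c_0^2$; this is free if you take $\tilde R_n:=\max\left(R_n,\max(c_0,2)\sqrt n\right)$ (still at most $c_1n$ for large $n$), so that $t_n\ge4$.

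The substantive one: in the branch $t_n\to\infty$ you conclude ``since $n/t_n\to\infty$ this is $\gg\log n$'', but that inference is not valid by itself --- $n/t_n=n^2/\tilde R_n^2$ can tend to infinity more slowly than $\log n$ (for instance $R_n=n/\sqrt{\log\log n}$ gives $n/t_n=\log\log n$), and this is precisely the danger you flagged earlier and never discharged. The fix is short: since $\log t_n+\log(n/t_n)=\log n$, at least one of the two summands is $\ge\frac12\log n$. If $\log t_n\ge\frac12\log n$, then $C\log(t_n)\tfrac{n}{t_n}\ge\frac{C}{2}\log n\cdot\tfrac{n^2}{\tilde R_n^2}>k\log n$ as soon as $\tfrac{n^2}{\tilde R_n^2}\ge\tfrac{2k}{C}$, which holds eventually because $R_n=o(n)$ forces $\tilde R_n=o(n)$. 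If instead $\log(n/t_n)\ge\frac12\log n$, then $n/t_n\ge\sqrt n$ and, using $\log t_n\ge\log 4>1$, the exponent is at least $C\sqrt n\gg k\log n$. With these two adjustments your argument is complete and coincides with what the paper has in mind.
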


In the other extreme, we have the following corollary: 
\begin{cor}
\label{cor:sqrtn_scale}For a sufficiently large $n$, there exists
a convex body $K\subset\mathbb{R}^{n}$ in John's position such that
there is no polytope $P$ that has less than $\exp(cn)$ number of
facets and satisfies 
\[
K\subset P\subset\sqrt{n}K,
\]
where $c>0$ is a universal constant. 
\end{cor}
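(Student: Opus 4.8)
The plan is to deduce Corollary~\ref{cor:sqrtn_scale} directly from Theorem~\ref{thm:MAIN}, applied with a homothety ratio $R$ equal to a fixed constant multiple of $\sqrt{n}$. One cannot take $R=\sqrt{n}$ itself: then $R^{2}/n=1$, the factor $\log(R^{2}/n)$ vanishes, and the lower bound of Theorem~\ref{thm:MAIN} collapses to $\exp(0)=1$. So I would instead fix the constant $\lambda:=\max\{c_{0},2\}>1$ and set $R:=\lambda\sqrt{n}$, where $c_{0},c_{1},C>0$ are the universal constants furnished by Theorem~\ref{thm:MAIN}. For all sufficiently large $n$ we have $c_{0}\sqrt{n}\le R=\lambda\sqrt{n}\le c_{1}n$, so Theorem~\ref{thm:MAIN} produces a convex body $K\subset\R^{n}$ which, by Remark~1 following that theorem (the relevant inclusions being invariant under invertible linear maps, and such maps preserving the number of facets of a polytope), may be assumed to be in John's position, and such that every polytope $P$ with $K\subset P\subset RK$ has at least $\exp(C\log(\lambda^{2})\lambda^{-2}n)=\exp(cn)$ facets, where $c:=2C\log(\lambda)/\lambda^{2}>0$ is a universal constant.

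It then remains only to pass from the sandwich condition $K\subset P\subset RK$ to the condition $K\subset P\subset\sqrt{n}K$ appearing in the corollary. Since $K$ is in John's position we have $0\in B_{2}^{n}\subset K$, and hence $aK\subset bK$ whenever $0<a\le b$ (if $x\in aK$ then $x/b$ is a convex combination of $x/a\in K$ and $0\in K$); taking $a=\sqrt{n}\le\lambda\sqrt{n}=R=b$ gives $\sqrt{n}K\subset RK$. Consequently every polytope $P$ with $K\subset P\subset\sqrt{n}K$ also satisfies $K\subset P\subset RK$ and therefore has at least $\exp(cn)$ facets; equivalently, no polytope with fewer than $\exp(cn)$ facets fits between $K$ and $\sqrt{n}K$, which is exactly what the corollary asserts.

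There is no genuinely difficult step in this argument. The one subtlety worth flagging is the degeneracy of the bound of Theorem~\ref{thm:MAIN} as $R\downarrow\sqrt{n}$: this is what forces the choice $R=\lambda\sqrt{n}$ with a constant $\lambda>1$ (rather than $R=\sqrt{n}$), after which the elementary monotonicity $\sqrt{n}K\subset RK$, valid because $0\in K$, reconciles the two sandwiching conditions. If one wished, the conclusion could also be phrased, via Remark~2 after Theorem~\ref{thm:MAIN}, in terms of the number of facets of the extremal polytope $K$ itself, but this strengthening is not needed here.
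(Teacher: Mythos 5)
Your proposal is correct and is essentially the paper's own argument: the corollary is stated there as a direct consequence of Theorem \ref{thm:MAIN} with $R$ a constant multiple of $\sqrt{n}$, which is exactly what you do. Your explicit handling of the two small points left implicit in the paper --- choosing $\lambda>1$ so the factor $\log(R^{2}/n)$ does not degenerate, and using $0\in K$ to get $\sqrt{n}K\subset RK$ --- is accurate and fills in the routine details.
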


As we previously mentioned, for a symmetric convex body $K$, there
exists a polytope $P$ with $O\left(n^{2}\right)$ facets such that
$K\subset P\subset\sqrt{n}K$. Corollary \ref{cor:sqrtn_scale} shows
that approximating a non-symmetric body, in the same scale of $\sqrt{n}$
could be much more expensive.

The fact that Theorem \ref{thm:MAIN} cannot provide a better result
when $R=o(\sqrt{n})$ is not surprising. Using a net argument, one
can derive the following: 
\begin{prop}
\label{prop:NetBound}Suppose $B_{2}^{n}\subset K\subset RB_{2}^{n}$.
For a sufficiently small $\delta>0$, there exists a polytope $P_{\delta}$
with no more than $\exp(c\log(\frac{2R}{\delta})n)$ facets such that
\[
(1-\delta)P_{\delta}\subset K\subset P_{\delta}.
\]
\end{prop}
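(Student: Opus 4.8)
The plan is a standard $\eta$-net argument on the sphere, with the mesh size $\eta$ tuned to $R$ and $\delta$.

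Fix a parameter $\eta\in(0,1)$ to be chosen later and let $N\subset\Sph^{n-1}$ be an $\eta$-net of the unit sphere, so that $|N|\le(1+2/\eta)^{n}$ by the usual volumetric estimate. Write $h_{K}(u):=\max_{x\in K}\iprod{x}{u}$ for the support function of $K$ and set
\[
P_{\delta}:=\bigl\{x\in\R^{n}\,:\,\iprod{x}{u}\le h_{K}(u)\ \text{ for all }u\in N\bigr\}.
\]
Then $K\subset P_{\delta}$ by definition, and $P_{\delta}$ has at most $|N|$ facets. A one-line bound gives $P_{\delta}\subset\tfrac{R}{1-\eta}\uball$: if $x\in P_{\delta}\setminus\{0\}$ and $u\in N$ satisfies $\norm{u-x/\norm x}\le\eta$, then $\norm x=\iprod{x}{u}+\iprod{x}{x/\norm x-u}\le h_{K}(u)+\eta\norm x\le R+\eta\norm x$. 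Hence $P_{\delta}$ is bounded --- so genuinely a polytope --- and has nonempty interior since $\uball\subset K\subset P_{\delta}$.

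The core of the proof is a comparison of support functions. The inclusion $K\subset R\uball$ makes $h_{K}$ $R$-Lipschitz on $\Sph^{n-1}$, while $\uball\subset K$ forces $h_{K}\ge1$ there. Fix $v\in\Sph^{n-1}$, pick $x\in P_{\delta}$ with $h_{P_{\delta}}(v)=\iprod{x}{v}$, and choose $u\in N$ with $\norm{u-v}\le\eta$. Using Cauchy--Schwarz, the defining inequalities of $P_{\delta}$, the Lipschitz bound $h_{K}(u)\le h_{K}(v)+R\eta$, and $\norm x\le\tfrac{R}{1-\eta}$, one obtains
\[
h_{P_{\delta}}(v)\le h_{K}(u)+\eta\norm x\le h_{K}(v)+R\eta+\frac{R\eta}{1-\eta}\le h_{K}(v)+\frac{2R\eta}{1-\eta}.
\]
Since $h_{K}(v)\ge1$, this reads $h_{P_{\delta}}(v)\le\bigl(1+\tfrac{2R\eta}{1-\eta}\bigr)h_{K}(v)$ for all $v\in\Sph^{n-1}$, hence for all $v\in\R^{n}$ by positive homogeneity. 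Now choose $\eta:=\delta/(4R)$; for $\delta$ small enough that $\eta\le\tfrac12$ we get $\tfrac{2R\eta}{1-\eta}\le4R\eta=\delta$, so $(1-\delta)h_{P_{\delta}}(v)\le(1-\delta)(1+\delta)h_{K}(v)\le h_{K}(v)$ for every $v$. Comparing support functions of the convex bodies $(1-\delta)P_{\delta}$ and $K$ gives $(1-\delta)P_{\delta}\subset K$. Finally $|N|\le(1+2/\eta)^{n}=(1+8R/\delta)^{n}\le\exp\bigl(c\log(\tfrac{2R}{\delta})\,n\bigr)$ for a suitable universal constant $c$ and small $\delta$, which is exactly the claimed bound.

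There is no serious obstacle here; the argument is entirely routine. The one point that needs care is that the error introduced by the net must be controlled \emph{relative} to $h_{K}(v)$ --- this is where the lower bound $h_{K}\ge1$ on the sphere is used --- rather than as an absolute error; this is what forces the mesh $\eta$ to scale like $\delta/R$, and hence produces the $\log(R/\delta)$ factor in the exponent of the facet count, consistently with the discussion preceding the statement.
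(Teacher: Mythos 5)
Your argument is correct and is essentially the paper's own proof: the same polytope cut out by support-function values at a $\Theta(\delta/R)$-net of the sphere, with the same two ingredients ($R$-Lipschitzness of $h_K$ from $K\subset R\uball$ and $h_K\ge 1$ from $\uball\subset K$). The only cosmetic difference is that you conclude $(1-\delta)P_{\delta}\subset K$ by bounding $h_{P_{\delta}}\le(1+\delta)h_{K}$ (after first checking $P_{\delta}$ is bounded), whereas the paper argues via boundary points of $K$ and radial functions; both are fine.
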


Applying the proposition to convex bodies in John's position, we conclude
the following. 
\begin{cor}
Let $K$ be a convex body in $\mathbb{R}^{n}$ in John's position,
where $n$ is sufficiently large. Then, there exists a polytope $P$
with at most $\exp(c\log(n)n)$ facets such that 
\[
\frac{1}{2}P\subset K\subset P,
\]
where $c>0$ is a universal constant.
\end{cor}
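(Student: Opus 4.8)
The plan is to obtain this corollary directly from Proposition~\ref{prop:NetBound} together with the classical John theorem; no new ingredients are needed.

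First I would reduce to the hypothesis of Proposition~\ref{prop:NetBound}. Since $K$ is in John's position, its maximal-volume inscribed ellipsoid is $B_{2}^{n}$, so $B_{2}^{n}\subset K$; and by John's theorem (in the general, not necessarily symmetric, form), a convex body whose John ellipsoid is $B_{2}^{n}$ satisfies $K\subset nB_{2}^{n}$. Hence $B_{2}^{n}\subset K\subset nB_{2}^{n}$, which is precisely the hypothesis of Proposition~\ref{prop:NetBound} with $R=n$.

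Next I would fix the parameter. Proposition~\ref{prop:NetBound} is only asserted for ``sufficiently small $\delta$''; let $\delta_{0}>0$ denote that implicit threshold and set $\delta:=\min\{1/2,\delta_{0}/2\}$, a fixed absolute constant. Applying the proposition with $R=n$ produces a polytope $P:=P_{\delta}$ with at most $\exp\!\left(c\log(2n/\delta)\,n\right)$ facets and with $(1-\delta)P_{\delta}\subset K\subset P_{\delta}$. Because $\delta\le 1/2$ we have $1-\delta\ge 1/2$, so
\[
\tfrac12 P\subset (1-\delta)P\subset K\subset P,
\]
which is the desired sandwich. Finally I would absorb the constant: since $\delta$ is an absolute constant, $\log(2n/\delta)=\log n+\log(2/\delta)\le 2\log n$ for all $n$ large enough, so the number of facets of $P$ is at most $\exp(2c\log(n)\,n)$; renaming $2c$ as $c$ yields the claimed bound.

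There is no real obstacle at the level of this corollary — all the substantive work lies in the covering/net construction behind Proposition~\ref{prop:NetBound}. The only point that requires a moment's care is the choice of $\delta$: one cannot simply set $\delta=1/2$, since $1/2$ may exceed the ``sufficiently small'' threshold in the proposition; taking $\delta=\min\{1/2,\delta_{0}/2\}$ both respects that hypothesis and keeps $1-\delta\ge 1/2$, so the target ratio $1/2$ is still attained.
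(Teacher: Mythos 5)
Your proposal is correct and matches the paper's (implicit) argument: apply Proposition~\ref{prop:NetBound} with $R=n$, using John's theorem to get $B_{2}^{n}\subset K\subset nB_{2}^{n}$ for a body in John's position, and take $\delta$ a fixed constant so that $1-\delta\ge\tfrac12$ and $\log(2n/\delta)=O(\log n)$. Your extra care about the ``sufficiently small $\delta$'' threshold is harmless but unnecessary, since the version of the proposition proved in Section~\ref{sec:UpperBound} allows any $0<\delta<1$, so $\delta=\tfrac12$ works directly.
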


The paper is organized as follows. In Section \ref{sec:Preliminaries}
we introduce notation and recall some needed results. The proof of
Theorem \ref{thm:MAIN} is presented in Section \ref{sec:main}. In
Section \ref{sec:UpperBound} we prove Propostion \ref{prop:NetBound}. 

\section{Preliminaries \label{sec:Preliminaries}}

For the standard Euclidean space $\mathbb{R}^{n}$, let $\langle\cdot,\cdot\rangle$
denote the usual inner product. For a vector $x\in\mathbb{R}^{n}$,
let $|x|$ denote its Euclidean norm. Let $S^{n-1}$ be the unit sphere
and $B_{2}^{n}$ be the unit Euclidean ball. Let $GL_{n}\left(\R\right)$
denote the group of invertible linear transformations. For $D_{1},\dots,D_{d}\subset\R^{n}$,
let $\conv{\left\{ D_{1},\cdots,D_{d}\right\} }$ denote the convex
hull of the sets $D_{1},\dots,D_{d}$. Given a convex body $K\sub\R^{n},$
let $\partial K$ denote its boundary and $\vol K$ denote the Lebesgue
measure of $K$. We recall two standard functions associate with a
convex body: Suppose $K$ is a convex body containing $0$. The support
function $h_{K}$ of $K$ is defined by 
\[
h_{K}\left(x\right):=\sup\left\{ \iprod xy\,:\,y\in K\right\} \qquad\forall x\in\R^{n}.
\]

The radial function of $K$ is defined by 
\[
\rho_{K}\left(x\right):=\sup\left\{ r>0\,:\,rx\in K\right\} \qquad\forall x\in\R^{n}\backslash\left\{ 0\right\} .
\]

In particular, $\frac{1}{\rho_{K}\left(x\right)}$ is also called
the gauge function of $K$.

For a convex set $K\subset\mathbb{R}^{n}$ that contains $0$, we
define its polar $K^{\circ}$ by 
\[
K^{\circ}:=\{y\in\mathbb{R}^{n}\,:\,\forall x\in K\,\,\langle x,y\rangle\le1\}.
\]

Suppose $K_{1},\dots,K_{d}\subset\R^{n}$ are convex sets containing
the origin. One can verify that $\left(\cap_{i=1}^{d}K_{i}\right)^{\circ}=\conv{\left\{ K_{1}^{\circ},\cdots,K_{d}^{\circ}\right\} }$.

An ellipsoid $E\subset K$ is called the John ellipsoid of $K$ if
$\text{vol}(E)\ge\text{vol}(E')$ for any other ellipsoid $E'\subset K$.
It is well known that the John ellipsoid of a convex body exists and
is unique. Furthermore, for any convex body $K$, there exists an
affine transformation $T$ such that the John ellipsoid of $TK$ is
$B_{2}^{n}$. As we mentioned before, we say that $K$ is in John's
position if its John ellipsoid is $B_{2}^{n}$.

Let $K\subset\mathbb{R}^{n}$ be a convex body in John's position.
A point $x\in\mathbb{R}^{n}$ is said to be a contact point of $K$
and $B_{2}^{n}$ if $x\in\partial K\cap\partial B_{2}^{n}$. A classical
theorem of F. John provides a decomposition of the identity in terms
of contact points (see e.g., Citation \cite[p 52]{AGA}):

\begin{thm}
\label{JP} Let $K$ be a convex body in $\mathbb{R}^{n}$ that contains
$B_{2}^{n}$. Then, $K$ is in John's position if and only if there
exist contact points $x_{1},..,x_{m}$ and $a_{1},..,\,a_{m}>0$ such
that 
\end{thm}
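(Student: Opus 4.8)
We take the statement in its standard form: for $B_2^n\subseteq K$, the body $K$ is in John's position if and only if there exist finitely many contact points $x_1,\dots,x_m\in\partial K\cap\partial B_2^n$ and weights $a_1,\dots,a_m>0$ with $\sum_{i=1}^m a_i x_i=0$ and $\sum_{i=1}^m a_i\,x_i\otimes x_i=\mathrm{Id}_n$. I would prove the two implications separately: ``decomposition $\Rightarrow$ $B_2^n$ is the John ellipsoid'' is a short extremality estimate, while the converse is the heart of the matter and runs through a Hahn--Banach separation followed by a volume-increasing perturbation of $B_2^n$.

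For the first implication, assume such $x_i,a_i$ exist; taking traces in the second identity gives $\sum a_i=n$. Write an arbitrary ellipsoid $\mathcal E\subseteq K$ as $\mathcal E=u+T(B_2^n)$ with $T$ symmetric positive definite (possible after a polar decomposition), so that $\vol{\mathcal E}=\det T\cdot\vol{B_2^n}$. Since each $x_i$ is a contact point and $B_2^n\subseteq K$, the tangent hyperplane $\{y:\iprod{y}{x_i}=1\}$ supports $K$, hence $h_{\mathcal E}(x_i)=\iprod{u}{x_i}+|Tx_i|\le1$. I would bound $\det T$ from above via the identity $\log\det T=\operatorname{tr}(\log T)=\sum a_i\iprod{(\log T)x_i}{x_i}$, using concavity of $\log$ along the spectral measure of $T$ to get $\iprod{(\log T)x_i}{x_i}\le\log\iprod{Tx_i}{x_i}\le\log|Tx_i|$, and then Jensen with weights $a_i/n$ together with $\sum a_i\iprod{u}{x_i}=\iprod{u}{\sum a_i x_i}=0$; this gives $\log\det T\le n\log\bigl(\tfrac1n\sum a_i(1-\iprod{u}{x_i})\bigr)=0$. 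Hence $\det T\le1$, so $B_2^n$ has maximal volume, and tracking the equality cases of the two Jensen steps forces $T=\mathrm{Id}_n$ and then $u=0$, which simultaneously yields uniqueness of the John ellipsoid.

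For the converse, assume $B_2^n$ is the John ellipsoid of $K$ and let $\mathcal C=\partial K\cap\partial B_2^n$; this is compact, and nonempty because otherwise $B_2^n$ lies in the interior of $K$ and a small dilate still fits, contradicting maximality. It suffices to show $\bigl(\tfrac1n\mathrm{Id}_n,\,0\bigr)\in\operatorname{conv}\{(x\otimes x,\,x):x\in\mathcal C\}$ inside $\mathrm{Sym}(n)\times\R^n$; rescaling the resulting convex combination by $n$ and applying Carath\'{e}odory in the affine slice $\{\operatorname{tr}M=1\}$ (of dimension $n(n+3)/2-1$) then produces the decomposition with $m\le n(n+3)/2$. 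If this fails, strict separation provides $(H,v)\in\mathrm{Sym}(n)\times\R^n$ and $\alpha$ with $\iprod{Hx}{x}+\iprod{v}{x}\le\alpha<\tfrac1n\operatorname{tr}H$ for all $x\in\mathcal C$; replacing $H$ by $H-\tfrac{\operatorname{tr}H}{n}\mathrm{Id}_n$ I may assume $\operatorname{tr}H=0$ and $\iprod{Hx}{x}+\iprod{v}{x}\le-\eta<0$ on $\mathcal C$. I then test the ellipsoid $\mathcal E_\delta:=\tfrac{\delta}{2}v+(\mathrm{Id}_n+\delta H)^{1/2}(B_2^n)$ for small $\delta>0$: since $\operatorname{tr}H=0$ we have $\vol{\mathcal E_\delta}=\det(\mathrm{Id}_n+\delta H)^{1/2}\vol{B_2^n}=(1+O(\delta^2))\vol{B_2^n}$, while for $\theta\in S^{n-1}$ near $\mathcal C$ a Taylor expansion gives $h_{\mathcal E_\delta}(\theta)=1+\tfrac{\delta}{2}(\iprod{H\theta}{\theta}+\iprod{v}{\theta})+O(\delta^2)$, which sits below $1$ by a margin of order $\delta$; and for the remaining directions $h_K\ge1+\rho_0$ for some $\rho_0>0$, leaving room there as well. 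Dilating $\mathcal E_\delta$ by $1+c\delta$ for a small enough constant $c$ then gives an ellipsoid contained in $K$ of volume $(1+cn\delta+O(\delta^2))\vol{B_2^n}>\vol{B_2^n}$, contradicting maximality of $B_2^n$.

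The step I expect to cost the most care is the compactness bookkeeping closing the converse. One must first observe that $\{\theta\in S^{n-1}:h_K(\theta)=1\}$ equals $\mathcal C$ exactly (a supporting hyperplane of $K$ lying at distance $1$ from the origin is forced to be tangent to $B_2^n$, and its point of tangency then lies in $\partial K$), so that off any fixed neighborhood $U\supseteq\mathcal C$ one has $h_K\ge1+\rho_0$ for some $\rho_0>0$; then $U$ and $\delta$ must be chosen in the right order so that the separation gap on $U$, the clearance $\rho_0$ off $U$, and the dilation $c\delta$ remain mutually compatible and keep $(1+c\delta)\mathcal E_\delta\subseteq K$. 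The cleanest way to organize this is to fix such a cover of $S^{n-1}$ before sending $\delta\to0$.
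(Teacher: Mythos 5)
The paper does not prove this statement at all -- it is F.~John's classical theorem, quoted with a pointer to the Artstein-Avidan--Giannopoulos--Milman book -- so there is no in-paper argument to compare against, and your blind proof is correct and is essentially the standard one from that reference: the easy direction via the trace identity $\operatorname{tr}(M)=\sum_i a_i\langle Mx_i,x_i\rangle$ and the Jensen/AM--GM bound giving $\log\det T\le 0$ for any inscribed ellipsoid $u+T(B_2^n)$, and the converse via strict separation of $(\tfrac1n I_n,\,0)$ from $\operatorname{conv}\{(x\otimes x,\,x):x\in\partial K\cap\partial B_2^n\}$ followed by the volume-increasing perturbation $\tfrac{\delta}{2}v+(I_n+\delta H)^{1/2}(B_2^n)$ with $\operatorname{tr}H=0$, including the correct compactness step showing $h_K\ge 1+\rho_0$ off a neighborhood of the contact set. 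The only glib spot is the closing remark that the equality analysis "yields uniqueness of the John ellipsoid," which is underargued but also unnecessary: maximality of the volume of $B_2^n$ together with the uniqueness of the John ellipsoid (taken as known in the paper) already gives that $K$ is in John's position.
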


\begin{enumerate}
\item $\sum_{i=1}^{m}a_{i}x_{i}\otimes x_{i}=I_{n}$, and 
\item $\sum_{i=1}^{m}a_{i}x_{i}=\vec{0}$. 
\end{enumerate}
Let $\Delta_{n}$ be the regular simplex in $\mathbb{R}^{n}$ that
has an inner radius equal to $1$. Using the symmetry of $\Delta_{n}$
and uniqueness of the John ellipsoid, it is not difficult to check
that $\Delta_{n}$ is in John's position. Suppose $u_{1},\cdots,u_{n+1}$
are the contact points of $\Delta_{n}$. Then, $\langle u_{i},u_{j}\rangle=-\frac{1}{n}$
for $i\neq j$ and $\left\{ -nu_{i}\right\} _{i=1}^{n+1}$ are the
vertices of $\Delta_{n}$. We can express $\Delta_{n}$ and $\Delta_{n}^{\circ}$
in terms of $\left\{ u_{i}\right\} _{i=1}^{n+1}$: 
\[
\Delta_{n}=\left\{ x\in\R^{n}\,:\,\iprod xu\le1\:\forall u\in\left\{ u_{i}\right\} _{i=1}^{n+1}\right\} ,\qquad\Delta_{n}^{\circ}=\conv{\left\{ u_{i}\right\} _{i=1}^{n+1}}.
\]

For any integer $m\in\mathbb{N}$, we define $[m]:=\{1,\cdots,m\}$.
For a subset $I\subset[m]$, let $|I|$ denote its cardinality. For
$i,j\in\mathbb{N}$, we define 
\begin{eqnarray*}
\delta_{ij}:=\left\{ \begin{array}{cc}
1 & \text{if \ensuremath{i=j},}\\
0 & \text{otherwise.}
\end{array}\right.
\end{eqnarray*}


\section{Proof of the main result\label{sec:main}}

In this section, we prove Theorem \ref{thm:MAIN}. The proof will
be divided into three main propositions. The body $K$ is obtained
by intersecting a simplex in John's position with a large number of
half spaces. As long as each half space contains the John ellipsoid
of the simplex, the new body will be in John's position as well. The
construction of the body uses both certain structures and randomness.

\subsection{Lower bound of facets}

The first proposition below shows how to determine if a convex body
cannot be approximated by polytopes with few facets.

\begin{prop}
\label{IdeaOfCounterExample} Let $K:=\{x\in\mathbb{R}^{n}\,:\,\langle x,y_{i}\rangle\le1\quad\forall i\in[m]\}\cap L$,
where $y_{1},...,y_{m}$ are vectors in $\mathbb{R}^{n}$ and $L$
is a convex body in $\mathbb{R}^{n}$ that has $0$ as an interior
point. Suppose there are points $x_{1},...,x_{m}\in K$ such that
for some $R>1$, we have 
\[
\langle x_{i},y\rangle\left\{ \begin{array}{cc}
=1 & \text{ if \ensuremath{y=y_{i}}, }\\
\le\frac{1}{2R} & \text{ if \ensuremath{y=y_{j}} with \ensuremath{i\neq j}.}\\
\le\frac{1}{2R} & \text{ if \ensuremath{y\in L^{\circ}}}
\end{array}\right.
\]
Then, there is no polytope $P$ that has less than $\frac{m}{2R}$
facets such that 
\[
K\subset P\subset RK.
\]
\end{prop}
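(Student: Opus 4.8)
The plan is to argue by contradiction: suppose $P$ is a polytope with fewer than $\frac{m}{2R}$ facets satisfying $K\subset P\subset RK$, and derive an impossibility from the existence of the "dual witnesses'' $x_1,\dots,x_m$. First I would write $P=\{x : \langle x,z_j\rangle\le 1,\ j\in[\ell]\}$ for some $z_1,\dots,z_\ell\in\mathbb{R}^n$ with $\ell<\frac{m}{2R}$; since $K\subset P$, each $z_j$ lies in $K^\circ$. Now $K^\circ=\conv\{y_1,\dots,y_m,L^\circ\}$ (using the polarity identity $(\cap K_i)^\circ=\conv\{K_i^\circ\}$ from the preliminaries, with the half-space $\{\langle\cdot,y_i\rangle\le1\}$ having polar the segment $[0,y_i]$). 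So each $z_j$ is a convex combination involving the $y_i$'s and points of $L^\circ$; write $z_j=\sum_{i=1}^m \lambda_{i}^{(j)} y_i + \mu^{(j)} w_j$ with $w_j\in L^\circ$, $\lambda_i^{(j)},\mu^{(j)}\ge 0$ summing to $1$ (here I may need to allow a small scalar $\le 1$ in front, since $K^\circ$ is the convex hull including $0$; this is a routine adjustment).

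The key step is a counting/averaging argument on the contact structure. For each index $i\in[m]$, I want to show that the constraint $\langle x, y_i\rangle\le 1$ "cannot be faithfully reproduced'' by the few constraints of $P$ unless one of the $z_j$ puts substantial weight on $y_i$. Concretely: since $x_i\in K\subset P$, we have $\langle x_i,z_j\rangle\le 1$ for all $j$; and since $P\subset RK$, we have $R^{-1}x_i'\in P$ for the point $x_i'$ which is $x_i$ scaled to the boundary of $K$ in... actually the cleaner route is: take $x_i\in K$, so $Rx_i\in RK$; I want to find a point just outside $K$ in the direction exposing $y_i$ that must be cut off by some $z_j$. The honest mechanism: consider the point $2Rx_i$... no — rather, I would test the functionals. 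For each $j$, $\langle x_i, z_j\rangle = \sum_k \lambda_k^{(j)}\langle x_i,y_k\rangle + \mu^{(j)}\langle x_i,w_j\rangle \le \lambda_i^{(j)}\cdot 1 + (1-\lambda_i^{(j)})\cdot\frac{1}{2R}$, using the hypothesis that $\langle x_i,y_k\rangle\le\frac{1}{2R}$ for $k\neq i$ and $\langle x_i,w_j\rangle\le\frac{1}{2R}$ since $w_j\in L^\circ$. So $\langle x_i,z_j\rangle\le \lambda_i^{(j)}+\frac{1}{2R}$. Now I use $P\subset RK$: the point $(1+\tfrac{1}{R})x_i\notin K$ would need... hmm, I need $x_i$ on the boundary. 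Since $\langle x_i,y_i\rangle=1$, the point $tx_i\notin K$ for $t>1$ (it violates the $y_i$ constraint), hence $tx_i\notin \frac1R P$ is false — wait, $\frac1R P\supset$? We have $P\subset RK$ so $\frac1R P\subset K$, giving no contradiction. Instead use $K\subset P$: that's the wrong direction too. The right one: $P\subset RK$ means that if a point is outside $RK$ it is outside $P$; equivalently every point of $P$ is in $RK$. I want a point in $P\setminus RK$ to get a contradiction, so I should produce, for each $i$, a point $p_i$ with $\langle p_i, y_i\rangle > R$ (hence $p_i\notin RK$) but $\langle p_i,z_j\rangle\le 1$ for all $j$ (hence $p_i\in P$) — impossible. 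Take $p_i = R\, x_i$ pushed slightly: set $p_i=\alpha x_i$ with $\alpha$ chosen so $\langle p_i,y_i\rangle=\alpha>R$, say $\alpha=R+1$. Then $p_i\notin RK$. For $p_i\in P$ we need $(R+1)\langle x_i,z_j\rangle\le 1$, i.e. $\langle x_i,z_j\rangle\le\frac{1}{R+1}$, i.e. $\lambda_i^{(j)}+\frac1{2R}\le\frac1{R+1}$ would suffice; this holds whenever $\lambda_i^{(j)}\le \frac{1}{R+1}-\frac{1}{2R}$. Since $\frac{1}{R+1}-\frac1{2R}>0$ for $R\ge 1$... it's roughly $\frac{1}{2R}$. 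So: \emph{if for a given $i$ every $z_j$ has $\lambda_i^{(j)}<\frac{1}{2R}$ (say), then $p_i=(R+1)x_i\in P\setminus RK$, contradiction.} Therefore for every $i\in[m]$ there exists $j$ with $\lambda_i^{(j)}\ge \frac{1}{2R}$. But $\sum_{i=1}^m\lambda_i^{(j)}\le 1$ for each $j$, so each $j$ can "serve'' at most $2R$ indices $i$; hence $m\le 2R\ell$, i.e. $\ell\ge\frac{m}{2R}$, contradicting $\ell<\frac{m}{2R}$.

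I expect the main obstacle to be getting the constants and the "$0\in K^\circ$'' bookkeeping exactly right: $K^\circ$ is the convex hull of $\{0,y_1,\dots,y_m\}\cup L^\circ$, so the coefficient representation of $z_j$ carries a possible extra mass at $0$, which only helps (it makes $\sum_i\lambda_i^{(j)}\le 1$ still, and weakens nothing), and one must also double-check that the threshold $\frac{1}{2R}$ stated in the proposition is precisely what makes $(R+1)(\lambda_i^{(j)}+\frac{1}{2R})\le 1$ fail-safe — it may be cleaner to use $p_i = 2R\,x_i$ or to invoke $R>1$ to absorb slack, but these are routine adjustments. A secondary point to handle carefully: the hypothesis gives $\langle x_i,y\rangle\le\frac1{2R}$ for $y\in L^\circ$, and $w_j\in L^\circ$, so that term is controlled; one should note $L^\circ$ is bounded (as $0\in\mathrm{int}\,L$) so the convex hull is a genuine compact set and Carathéodory applies if one wants finitely many terms, though it isn't strictly needed. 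Modulo these constant-chasing details, the counting argument "each of the $\ell$ half-spaces of $P$ can account for at most $2R$ of the $m$ original facets'' is the whole content.
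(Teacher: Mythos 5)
Your proof is essentially the paper's own argument: write each facet normal $z_{j}$ of $P$ as a point of $K^{\circ}=\conv{\{y_{i}\}_{i=1}^{m},\,L^{\circ}}$, use the hypotheses to get $\langle x_{i},z_{j}\rangle\le\lambda_{i}^{(j)}+\frac{1}{2R}$, conclude that a facet can ``serve'' an index $i$ only if $\lambda_{i}^{(j)}$ is at least of order $\frac{1}{2R}$, and count via $\sum_{i}\lambda_{i}^{(j)}\le1$. The one adjustment needed is in your covering criterion: with $p_{i}=(R+1)x_{i}$ the threshold is $\frac{1}{R+1}-\frac{1}{2R}=\frac{R-1}{2R(R+1)}$, which only yields $\ell\ge\frac{(R-1)m}{2R(R+1)}$ rather than the stated bound; taking $p_{i}=(R+\eps)x_{i}$ with $\eps\to0^{+}$ (equivalently, the paper's observation that $P\subset RK$ forces, for every $x_{i}\in\partial K$, some $j$ with $R\langle x_{i},z_{j}\rangle\ge1$) gives $\lambda_{i}^{(j)}\ge\frac{1}{2R}$ and hence exactly the claimed bound $\frac{m}{2R}$.
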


\begin{proof}
Suppose there exists $\left\{ w_{i}\right\} _{i=1}^{m_{1}}\subset\mathbb{R}^{n}$
such that $P:=\{x\in\mathbb{R}^{n}\,:\,\langle x,w_{l}\rangle\le1\,\,\forall l\in[m_{1}]\}$
satisfies 
\[
K\subset P\subset RK.
\]

The first inclusion indicates that $\left\{ w_{i}\right\} _{i=1}^{m_{1}}\subset K^{\circ}$.
The second inclusion is equivalent to the following: $\forall x\in\partial K$,
$R\langle x,w_{l}\rangle\ge1$ for some $l\in[m_{1}]$. Due to $\iprod{x_{i}}{y_{i}}=1$,
we also have $x_{i}\in\partial K$ for $i\in\left[m\right]$.

For $l\in[m_{1}]$, let $O_{l}$ be the sub-collection of $\{x_{i}\}_{i=1}^{m}$
such that $R\langle x_{i},w_{l}\rangle\ge1$. Observe that $K^{\circ}=\text{conv}(\{y_{i}\}_{i=1}^{m},L^{\circ})$.
Thus, $w_{l}$ can be expressed as a convex combination: 
\[
w_{l}=\sum_{i=1}^{m}\lambda_{i}y_{i}+\lambda_{m+1}y,
\]

where $y\in L^{\circ}$, $\lambda_{i}\ge0$, and $\sum_{i=1}^{m+1}\lambda_{i}=1$.

This expression is not necessarily unique, but we fix one such expression.
Taking inner product with $Rx_{i}$ we have 
\begin{eqnarray*}
R\langle x_{i},w_{l}\rangle & = & \sum_{i\neq j}^{m}\lambda_{j}R\iprod{x_{i}}{y_{j}}+\lambda_{m+1}R\iprod{x_{i}}y+\lambda_{i}R\\
 & \le & \frac{1}{2}+\lambda_{i}R.
\end{eqnarray*}

If $x_{i}\in O_{l}$, then $\lambda_{i}\ge\frac{1}{2R}$. Due to $\sum_{i=1}^{m+1}\lambda_{i}=1$,
we conclude that $|O_{l}|\le2R$.

Observe that $\cup_{l\in[m_{1}]}O_{l}=\{x_{i}\}_{i\in[m]}$; we conclude
that $m_{1}\ge\frac{m}{2R}$. Therefore, $P$ has at least $\frac{m}{2R}$
facets. 
\end{proof}
The example in the main theorem will be of the form $K:=\{x\,:\,\langle x,y_{i}\rangle\le1\,\,\forall i\in[m]\}\cap\Delta_{n}$,
where $\{y_{i}\}_{i=1}^{m}\subset S^{n-1}$ and $\Delta_{n}$ is a
regular simplex in John's position. Then, we will find $\{x_{i}\}_{i=1}^{m}$,
which satisfies the assumption of Proposition \ref{IdeaOfCounterExample}.


\subsection{Structure}

Here we have a deterministic proposition related to points in $S^{n-1}$. 
\begin{prop}
\label{Structure}Let $S:=S^{n-1}\cap\{x\,:\,\langle\beta,x\rangle=0\}$
for some $\beta\in S^{n-1}$. For $\theta\in S$, let $\theta^{\downarrow}:=-\frac{1}{8}\beta+\sqrt{1-\left(\frac{1}{8}\right)^{2}}\theta$
and $\theta^{\uparrow}:=\sqrt{1-(\frac{1}{7})^{2}}\beta+\frac{1}{7}\theta$.
Then, 
\end{prop}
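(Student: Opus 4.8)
The plan is to treat Proposition~\ref{Structure} as a purely computational statement. Whatever its asserted conclusion, it must consist of identities and inequalities among the Euclidean inner products (equivalently distances) of the vectors $\beta,\theta^{\downarrow},\theta^{\uparrow},\phi^{\downarrow},\phi^{\uparrow}$ for $\theta,\phi\in S$, since these are exactly the quantities the counterexample body of Theorem~\ref{thm:MAIN} will need to control in order to verify the hypotheses of Proposition~\ref{IdeaOfCounterExample}: there one cuts half-spaces with normals of the form $\theta^{\downarrow}$ and takes the witnessing points $x_i$ proportional to the corresponding $\theta^{\uparrow}$. Every such relation comes out of the same two observations, so the proof is bookkeeping once they are recorded.

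First I would fix the orthogonal splitting $\R^n=\R\beta\oplus\beta^{\perp}$. By definition of $S$ we have $\theta,\phi\in\beta^{\perp}$ with $|\theta|=|\phi|=1$, and $\theta^{\downarrow}=-\tfrac18\beta+\tfrac{\sqrt{63}}{8}\theta$, $\theta^{\uparrow}=\tfrac{\sqrt{48}}{7}\beta+\tfrac17\theta$ (using $\sqrt{1-1/64}=\sqrt{63}/8$ and $\sqrt{1-1/49}=\sqrt{48}/7$). Pythagoras gives at once $|\theta^{\downarrow}|^2=\tfrac1{64}+\tfrac{63}{64}=1$ and $|\theta^{\uparrow}|^2=\tfrac{48}{49}+\tfrac1{49}=1$, so $\theta^{\downarrow},\theta^{\uparrow}\in S^{n-1}$, together with $\langle\theta^{\downarrow},\beta\rangle=-\tfrac18$ and $\langle\theta^{\uparrow},\beta\rangle=\sqrt{48}/7$.

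Second I would compute every cross term as an affine function of the single parameter $t:=\langle\theta,\phi\rangle\in[-1,1]$, using $\langle\beta,\beta\rangle=1$ and $\langle\beta,\theta\rangle=\langle\beta,\phi\rangle=0$:
\[
\langle\theta^{\downarrow},\phi^{\downarrow}\rangle=\frac{1+63t}{64},\qquad
\langle\theta^{\uparrow},\phi^{\uparrow}\rangle=\frac{48+t}{49},\qquad
\langle\theta^{\downarrow},\phi^{\uparrow}\rangle=\langle\theta^{\uparrow},\phi^{\downarrow}\rangle=\frac{\sqrt{63}\,t-\sqrt{48}}{56}.
\]
Specializing to $t=1$ gives the diagonal values; in particular $\langle\theta^{\downarrow},\theta^{\uparrow}\rangle=\tfrac{\sqrt{63}-\sqrt{48}}{56}>0$, i.e.\ the direction used for $x_i$ is never orthogonal to the normal of the facet it must touch, while $\langle\theta^{\downarrow},\phi^{\uparrow}\rangle\le0$ once $t\le\sqrt{48/63}=4/\sqrt{21}$, and $\langle\theta^{\uparrow},\phi^{\uparrow}\rangle\ge\tfrac{48}{49}$ for all $t\ge0$, so the $\theta^{\uparrow}$ are all clustered near $\beta$. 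The same decomposition yields the distance identities $|\theta^{\downarrow}-\phi^{\downarrow}|=\tfrac{\sqrt{63}}{8}|\theta-\phi|$ and $|\theta^{\uparrow}-\phi^{\uparrow}|=\tfrac17|\theta-\phi|$, so both maps are bi-Lipschitz on $S$ and carry nets to nets. Each inequality the proposition claims is then read off by the endpoint principle: an affine function of $t$ attains its extrema on any subinterval of $[-1,1]$ at an endpoint, so verifying it reduces to substituting one or two values of $t$ and checking an arithmetic inequality among the fixed constants $\tfrac18,\tfrac17,\sqrt{63},\sqrt{48},49,56,64$.

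There is no genuine obstacle --- the proposition is elementary --- and the only point requiring care is the numerology. The constants $\tfrac18,\tfrac17$ are tuned so that three competing demands hold at once: $\sqrt{63}-\sqrt{48}>0$ (hence $\langle\theta^{\downarrow},\theta^{\uparrow}\rangle>0$); the value $\tfrac{\sqrt{63}-\sqrt{48}}{56}$ is small, so that rescaling $\theta^{\uparrow}$ onto the slab $\{\langle\cdot,\theta^{\downarrow}\rangle=1\}$ stretches it only by a controlled factor (which is what later feeds the homothety ratio $R$); and $\theta^{\uparrow}$ lies within angle $\arccos(\sqrt{48}/7)$ of $\beta$, keeping the later interaction with the simplex contact points under control. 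I would check that the chosen constants satisfy all of these and then state, one line apiece, the handful of inequalities that the subsequent propositions invoke.
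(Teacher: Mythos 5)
Your proposal is correct and, despite the hedging about the exact conclusions (the statement you saw was truncated after ``Then,''), your computation $\langle\theta^{\downarrow},\phi^{\uparrow}\rangle=\frac{\sqrt{63}\,t-\sqrt{48}}{56}$ with $t=\langle\theta,\phi\rangle$ is exactly the paper's argument: positivity forces $t>\sqrt{48/63}>\frac34$, and the $t=1$ value $\frac{\sqrt{63}-\sqrt{48}}{56}\in(0,1)$ is precisely $1/C_{0}$, giving $C_{0}>1$. The only cosmetic difference is that the paper decomposes $\alpha$ along $\theta$ and an orthogonal unit vector rather than using the $\beta\oplus\beta^{\perp}$ splitting, so this is essentially the same proof.
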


\begin{enumerate}
\item For $\alpha,\theta\in S$, $\langle\alpha^{\downarrow},\theta^{\uparrow}\rangle>0$
implies $\langle\alpha,\theta\rangle>\frac{3}{4}$, and 
\item $\langle\theta^{\uparrow},\theta^{\downarrow}\rangle=\frac{1}{C_{0}}$
for $\theta\in S$ where $C_{0}:=\frac{1}{\frac{1}{7}\sqrt{1-\left(\frac{1}{8}\right)^{2}}\left(1-\sqrt{\frac{48}{63}}\right)}>1$. 
\end{enumerate}
In our construction of $K$, $y_{i}$ will be $\theta_{i}^{\uparrow}$
for some $\theta_{i}\in S$ and $x_{i}$ will be $C_{0}\theta_{i}^{\downarrow}$.
In particular, the first statement of Proposition \ref{Structure}
implies that $\langle x_{i},y_{j}\rangle<0$ when $\iprod{\theta_{i}}{\theta_{j}}<\frac{3}{4}$. 
\begin{proof}
\begin{figure}[h]
\centering \includegraphics[scale=0.2]{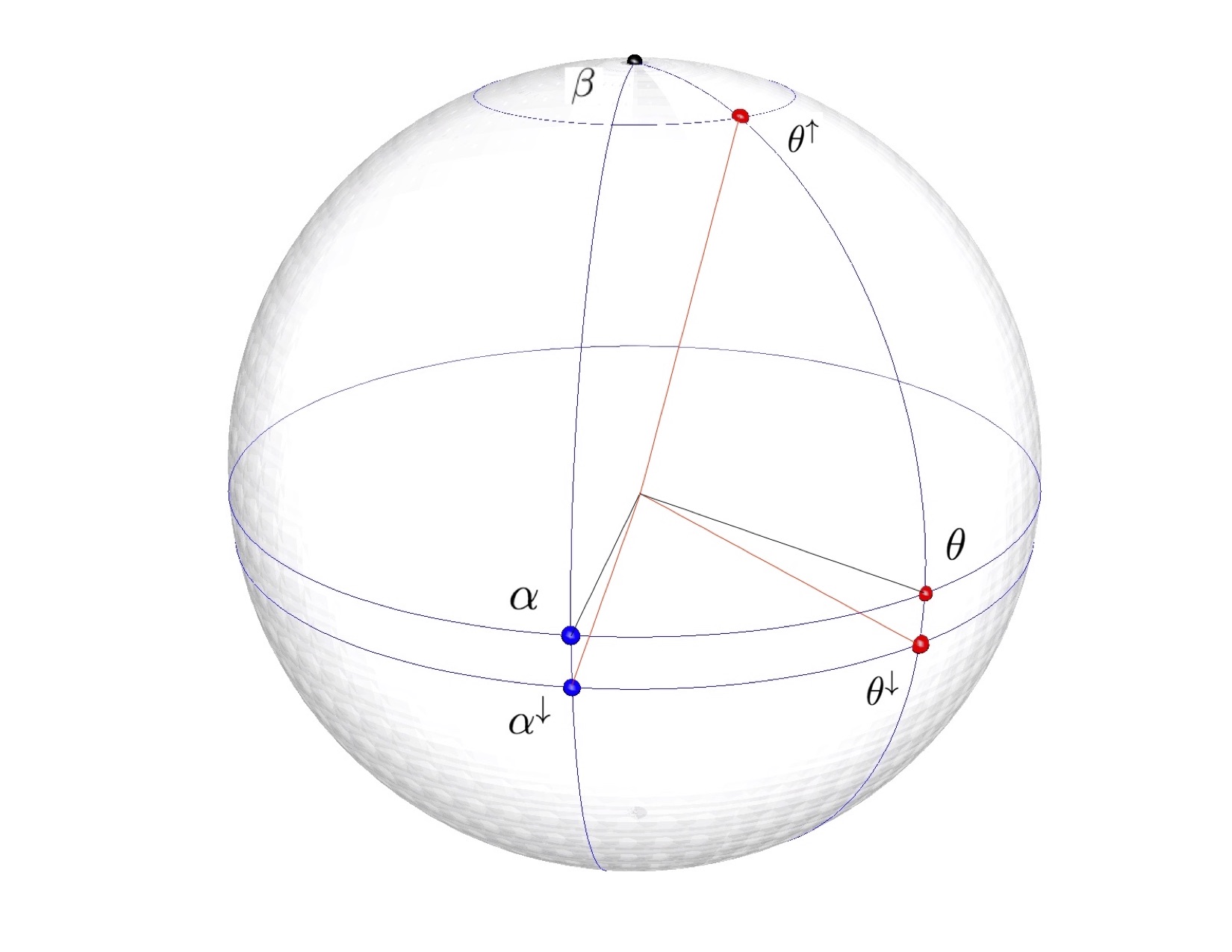} 
\end{figure}

We fix $\theta\in S$. For $\alpha\in S$, it can be expressed as
\begin{align*}
\alpha & =s\theta+\sqrt{1-s^{2}}\alpha',
\end{align*}

where $s=\iprod{\alpha}{\theta}$ and $\alpha'=\frac{\alpha-s\theta}{\left|\alpha-s\theta\right|}$.
Notice that $\alpha'\perp\theta$ and $\alpha'\in S$. Thus, 
\begin{eqnarray*}
\iprod{\alpha^{\downarrow}}{\theta^{\uparrow}} & = & \langle-\frac{1}{8}\beta+\sqrt{1-\left(\frac{1}{8}\right)^{2}}(s\theta+\sqrt{1-s^{2}}\alpha'),\sqrt{1-(\frac{1}{7})^{2}}\beta+\frac{1}{7}\theta\rangle\\
 & = & -\frac{1}{8}\sqrt{1-(\frac{1}{7})^{2}}+s\frac{1}{7}\sqrt{1-\left(\frac{1}{8}\right)^{2}}.
\end{eqnarray*}
Suppose $\iprod{\alpha^{\downarrow}}{\theta^{\uparrow}}>0$. Then,
$s\ge\frac{7}{8}\sqrt{\frac{48}{49}\frac{64}{63}}=\sqrt{\frac{48}{63}}>\frac{3}{4}.$

Observe that $\langle\theta^{\downarrow},\theta^{\uparrow}\rangle$
is the same for all $\theta\in S$, 
\begin{align*}
\langle\theta^{\downarrow},\theta^{\uparrow}\rangle & =\frac{1}{7}\sqrt{1-\left(\frac{1}{8}\right)^{2}}-\frac{1}{8}\sqrt{1-(\frac{1}{7})^{2}}\\
 & =\frac{1}{7}\sqrt{1-\left(\frac{1}{8}\right)^{2}}\left(1-\sqrt{\frac{48}{63}}\right).
\end{align*}

Since $0<\frac{1}{7}\sqrt{1-\left(\frac{1}{8}\right)^{2}}\left(1-\sqrt{\frac{48}{63}}\right)<1$,
we conclude that $C_{0}=\frac{1}{\frac{1}{7}\sqrt{1-\left(\frac{1}{8}\right)^{2}}\left(1-\sqrt{\frac{48}{63}}\right)}>1.$ 
\end{proof}

\subsection{Randomness}

In the construction, we will choose $\left\{ \theta_{i}\right\} $
independently and uniformly according to a probability distribution.
In order to apply Proposition \ref{IdeaOfCounterExample}, we need
to choose $\theta_{i}$ so that $\iprod{x_{i}}y\le\frac{1}{2R}$ for
all $y\in\Delta_{n}^{\circ}$. Equivalently, $\rho_{\Delta_{n}}(\theta_{i}^{\downarrow})$
needs to be larger than $2RC_{0}$. The uniform randomness on $S$
does not work in this case. Thus, a probability that is compatible
with the structure of $\Delta_{n}$ is required.

The following proposition is a tail bound for the hypergeometric distribution. 
\begin{prop}
\label{Tailbound} For a sufficiently large $n\in\mathbb{N}_{+}$,
let $k$ be a positive integer satisfying $100<k<\frac{1}{2e^{8}}n$.
Suppose $I,\,J$ are chosen independently and uniformly from $\left\{ W\subset\left[n\right]\,:\,\left|W\right|=k\right\} $.
Then, 
\[
\PP\left(\left|I\cap J\right|\ge\frac{k}{2}\right)\le\left(\frac{2k}{n}\right)^{k/5}.
\]
\end{prop}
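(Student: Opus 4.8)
The plan is to estimate the hypergeometric tail $\PP(|I\cap J|\ge k/2)$ by conditioning on $I$ and computing the probability over a uniform random $k$-subset $J$. By symmetry we may fix $I$ to be any particular $k$-subset; then $|I\cap J|$ follows the hypergeometric distribution with parameters $(n,k,k)$. The event $\{|I\cap J|\ge k/2\}$ is contained in the union, over all subsets $A\subseteq I$ with $|A|=\lceil k/2\rceil$, of the events $\{A\subseteq J\}$. This gives the union bound
\[
\PP\left(|I\cap J|\ge k/2\right)\le\binom{k}{\lceil k/2\rceil}\cdot\PP\left(A\subseteq J\right)=\binom{k}{\lceil k/2\rceil}\cdot\frac{\binom{n-\lceil k/2\rceil}{k-\lceil k/2\rceil}}{\binom{n}{k}}.
\]
I would then bound $\PP(A\subseteq J)=\binom{k}{\lceil k/2\rceil}\big/\binom{n}{\lceil k/2\rceil}$ (writing the ratio of binomials the other way: the probability that a uniform $k$-set contains a fixed $\lceil k/2\rceil$-set equals $\binom{k}{\lceil k/2\rceil}/\binom{n}{\lceil k/2\rceil}$, since both count ways to choose the remaining elements over all elements — more precisely $\PP(A\subseteq J)=\prod_{j=0}^{\lceil k/2\rceil-1}\frac{k-j}{n-j}\le (k/n)^{\lceil k/2\rceil}$ for $k\le n$).

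Carrying this out, I would use the two standard estimates $\binom{k}{\lceil k/2\rceil}\le 2^{k}$ and $\PP(A\subseteq J)\le (k/n)^{k/2}$ (using $\lceil k/2\rceil\ge k/2$ and $\frac{k-j}{n-j}\le \frac{k}{n}$ for all $j\ge 0$). Multiplying,
\[
\PP\left(|I\cap J|\ge k/2\right)\le 2^{k}\left(\frac{k}{n}\right)^{k/2}=\left(\frac{4k}{n}\right)^{k/2}.
\]
To reach the claimed bound $(2k/n)^{k/5}$ I would then need $\left(\frac{4k}{n}\right)^{k/2}\le\left(\frac{2k}{n}\right)^{k/5}$, i.e. $\left(\frac{4k}{n}\right)^{3/10}\le \left(\frac{2k}{n}\right)^{-1/5}\cdot\left(\frac{4k}{n}\right)^{1/2}\cdot\left(\frac{2k}{n}\right)^{1/5}$ — cleaner to just check $\left(\frac{4k}{n}\right)^{1/2}\le\left(\frac{2k}{n}\right)^{1/5}$ raised to appropriate powers; writing $t=k/n<\tfrac{1}{2e^{8}}$, the inequality $(4t)^{1/2}\le (2t)^{1/5}$ is equivalent to $(4t)^{5}\le (2t)^{2}$, i.e. $4^{5}t^{5}\le 4 t^{2}$, i.e. $t^{3}\le 4^{-4}$, which holds comfortably since $t<\tfrac{1}{2e^{8}}$. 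Thus the factor $e^{8}$ in the hypothesis is exactly what makes the exponents $1/2$ and $1/5$ reconcile, with room to spare, and the hypothesis $k>100$ is only used to ensure $\lceil k/2\rceil$ and $k/5$ behave (and to absorb ceiling/rounding discrepancies between $k/2$ and $\lceil k/2\rceil$).

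The main technical obstacle is bookkeeping with the ceilings: the event is $|I\cap J|\ge k/2$, so when $k$ is odd one works with $\lceil k/2\rceil=(k+1)/2$, and one must verify $\binom{k}{\lceil k/2\rceil}\le 2^{k}$ (immediate) and that $(k/n)^{\lceil k/2\rceil}\le (k/n)^{k/2}$ (immediate since $k/n<1$). None of these steps is deep; the real content is simply that the union bound over $\binom{k}{k/2}$ "cores" loses only a $2^{k}$ factor, which is negligible against the $(k/n)^{k/2}$ gain when $k/n$ is bounded away from a small constant. I would present the argument in the order: (1) reduce to fixed $I$ and hypergeometric $|I\cap J|$; (2) union bound over $\lceil k/2\rceil$-subsets $A\subseteq I$; (3) exact formula and elementary bound for $\PP(A\subseteq J)$; (4) combine and use $k/n<\tfrac{1}{2e^{8}}$ to pass from exponent $k/2$ (with base $4k/n$) to exponent $k/5$ (with base $2k/n$).
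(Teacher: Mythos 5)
Your proof is correct, but it takes a genuinely different route from the paper. The paper bounds each hypergeometric point probability $\PP(|I\cap J|=l)=\binom{k}{l}\binom{n-k}{k-l}/\binom{n}{k}$ for $l\ge k/2$ using the standard estimates $(a/b)^{b}\le\binom{a}{b}\le(ea/b)^{b}$, absorbs the resulting $e^{2k}$ factor using $n/(2k)\ge e^{8}$, and then takes a union bound over the at most $k$ values of $l$, which is where the hypothesis $k\ge100$ is used (to pass from exponent $k/4$ to $k/5$). You instead run a first-moment ``union bound over witnesses'': the event $\{|I\cap J|\ge k/2\}$ is contained in $\bigcup_{A\subseteq I,\,|A|=\lceil k/2\rceil}\{A\subseteq J\}$, and the exact identity $\PP(A\subseteq J)=\prod_{j=0}^{\lceil k/2\rceil-1}\frac{k-j}{n-j}\le(k/n)^{k/2}$ together with $\binom{k}{\lceil k/2\rceil}\le2^{k}$ gives $\PP(|I\cap J|\ge k/2)\le(4k/n)^{k/2}$, which is in fact a stronger bound than $(2k/n)^{k/5}$ whenever $256(k/n)^{3}\le1$ --- amply satisfied since $k/n<\frac{1}{2e^{8}}$. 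Your approach buys cleanliness and strength: no $e^{k}$ losses, no summation over $l$, no use of $k>100$ at all, and only a very weak density requirement on $k/n$; the paper's approach is the more routine pmf estimate and lands exactly on the stated exponent $k/5$. One small blemish: the parenthetical rearrangement you wrote before ``cleaner to just check'' is garbled (its right-hand side simplifies back to $(4k/n)^{1/2}$), but you discard it and the verification you actually carry out, $(4t)^{5}\le(2t)^{2}\iff t^{3}\le4^{-4}$ with $t=k/n$, is correct, so this does not affect the argument.
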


\begin{proof}
We may assume that $J$ is fixed. For any positive integer $1\le l\le k$,
\begin{equation}
\mathbb{P}(|I\cap J|=l)=\frac{{k \choose l}{n-k \choose k-l}}{{n \choose k}}.\label{RandomProb1}
\end{equation}
For positive integers $a\ge b$, ${a \choose b}=\frac{a(a-1)\cdots(a-b+1)}{b(b-1)\cdots1}$.
A standard estimate of ${a \choose b}$ is the following: 
\[
(\frac{a}{b})^{b}\le{a \choose b}\le(\frac{ea}{b})^{b}.
\]
Applying these bounds to (\ref{RandomProb1}), we have 
\begin{eqnarray}
\begin{split}\mathbb{P}(|I\cap J|=l) & \le & (\frac{ek}{l})^{l}(\frac{e(n-k)}{k-l})^{k-l}(\frac{k}{n})^{k}\\
 & = & e^{k}(\frac{k^{2}}{ln})^{l}(\frac{n-k}{n})^{k-l}(\frac{k}{k-l})^{k-l}.
\end{split}
\label{RandomProb2}
\end{eqnarray}
Assuming that $l\ge\frac{k}{2}$ and $2k<n$, 
\[
(\frac{k^{2}}{ln})^{l}\le(\frac{2k}{n})^{l}\le(\frac{2k}{n})^{\frac{k}{2}}.
\]
Also, using $(1+x)\le e^{x}$ for $x\in\mathbb{R}$ we have 
\[
(\frac{k}{k-l})^{k-l}=(1+\frac{l}{k-l})^{k-l}\le e^{l}\le e^{k}.
\]
Together with $(\frac{n-k}{n})^{k-l}\le1$, 
\begin{eqnarray*}
\mathbb{P}(|I\cap J|=l) & \le & \exp(k-\log(\frac{n}{2k})\frac{k}{2}+k)\\
 & = & \exp(2k-\log(\frac{n}{2k}))\frac{k}{2}).
\end{eqnarray*}
If $\frac{n}{2k}\ge e^{8}$, then $2k\le\log(\frac{n}{2k})\frac{k}{4}$.
We have the following: 
\[
\mathbb{P}(|I\cap J|=l)\le\exp(-\log(\frac{n}{2k})\frac{k}{4})\qquad\forall l\ge\frac{k}{2}.
\]
Using the union bound we get 
\[
\mathbb{P}(|I\cap J|\ge\frac{k}{2})\le k\exp(-\log(\frac{n}{2k})\frac{k}{4})\le\exp(-\log(\frac{n}{2k})\frac{k}{5})=(\frac{2k}{n})^{k/5},
\]

where the last inequality requires $k\ge100$. 
\end{proof}

\subsection{The construction}

Let $\Delta_{n}$ be the $n-$dimensional simplex in John's position
and $u_{1},\cdots,u_{n+1}$ be its contact points. We define $S:=S^{n-1}\cap\{x\in\mathbb{R}^{n}\,:\,\langle x,u_{1}\rangle=0\}$.
Notice that $\Delta_{n}'=\Delta_{n}\cap\{\langle x,u_{1}\rangle=0\}$
is a $\left(n-1\right)$-dimensional regular simplex. Let $v_{1},...,v_{n}\in S$
such that $\left\{ c_{n}nv_{i}\right\} _{i=1}^{n}$ are vertices of
$\Delta_{n}'$. It is not hard to verify that $c_{n}\rightarrow1$
as $n\rightarrow\infty$. Let $1\le k\le n$. For $I\subset[n]$ with
$|I|=k$, we define 
\[
v_{I}=\frac{\sum_{i\in I}v_{i}}{|\sum_{i\in I_{j}}v_{i}|}\in S.
\]
\begin{prop}
\label{innerproduct} For a sufficiently large $n$, let $1\le k\le\frac{n+3}{4}$.
Suppose $I,J\subset\left\{ W\subset\left[n\right]\,,\,\left|W\right|=k\right\} $
satisfy $|I\cap J|<\frac{k}{2}$. Then, 
\[
\langle v_{I},v_{J}\rangle\le\frac{3}{4}.
\]
\end{prop}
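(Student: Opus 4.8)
The plan is to compute $\langle v_I, v_J\rangle$ explicitly in terms of $k$ and $|I\cap J|$, using only the known inner products $\langle v_i, v_j\rangle$ among the vertices of the $(n-1)$-dimensional regular simplex $\Delta_n'$. Since $\{c_n n v_i\}_{i=1}^n$ are the vertices of $\Delta_n'$, which sits in the hyperplane $\{\langle x,u_1\rangle=0\}\cong\R^{n-1}$ and is itself a regular simplex in John's position inside that hyperplane, the contact-point relations recalled in the Preliminaries give $\langle v_i,v_i\rangle=1$ and $\langle v_i,v_j\rangle=-\tfrac{1}{n-1}$ for $i\neq j$ (the dimension of the ambient space of $\Delta_n'$ being $n-1$). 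So the first step is to record these two facts.

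Next I would expand $\bigl|\sum_{i\in I}v_i\bigr|^2 = k + k(k-1)\bigl(-\tfrac{1}{n-1}\bigr) = k\cdot\tfrac{n-k}{n-1}$, and likewise for $J$; this gives the normalizing constants. Then, writing $t:=|I\cap J|$, expand
\[
\Bigl\langle \sum_{i\in I}v_i,\ \sum_{j\in J}v_j\Bigr\rangle = t + (k^2 - t)\Bigl(-\frac{1}{n-1}\Bigr) = \frac{t(n-1) - k^2 + t}{n-1} = \frac{tn - k^2}{n-1},
\]
counting $t$ diagonal terms with value $1$ and the remaining $k^2 - t$ ordered pairs with value $-\tfrac{1}{n-1}$. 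Dividing by the product of the two norms $k\cdot\tfrac{n-k}{n-1}$ yields
\[
\langle v_I, v_J\rangle = \frac{tn - k^2}{k(n-k)}.
\]

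The final step is an elementary monotonicity argument: the right-hand side is increasing in $t$, so under the hypothesis $t < k/2$ it is bounded above by its value at $t = k/2$, namely $\dfrac{(kn/2) - k^2}{k(n-k)} = \dfrac{(n/2) - k}{n-k} = \dfrac{n-2k}{2(n-k)}$. I would then check that $\tfrac{n-2k}{2(n-k)}\le \tfrac34$ is equivalent to $2(n-2k)\le 3(n-k)$, i.e. $0\le n+k$, which holds trivially; in fact the bound $k\le\tfrac{n+3}{4}$ gives room to spare and is presumably what is needed to absorb the correction coming from $c_n\to 1$ if one insists on tracking exact constants rather than the asymptotic relation $\langle v_i,v_j\rangle = -\tfrac{1}{n-1}$. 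The only mild subtlety — hardly an obstacle — is making sure the relevant dimension in the simplex identities is $n-1$, not $n$, since $\Delta_n'$ lives in a hyperplane; once that bookkeeping is correct the inequality is immediate, and the slack in the hypothesis ($k\le (n+3)/4$ rather than $k < n/2$) simply guarantees a definite gap below $3/4$ uniformly in $n$.
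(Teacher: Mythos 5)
Your proof is correct and follows essentially the same route as the paper: both expand $\langle\sum_{i\in I}v_i,\sum_{j\in J}v_j\rangle$ using $\langle v_i,v_j\rangle=-\tfrac{1}{n-1}$ for $i\neq j$ and normalize by $\bigl|\sum_{i\in I}v_i\bigr|^2=k\tfrac{n-k}{n-1}$. The only difference is cosmetic: you keep the exact expression $\langle v_I,v_J\rangle=\tfrac{tn-k^2}{k(n-k)}$ and use monotonicity in $t$, which in fact makes the hypothesis $k\le\tfrac{n+3}{4}$ superfluous, whereas the paper discards the negative term and bounds the normalizing constant $c_{n,k}^2\le\tfrac43$ using that hypothesis.
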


\begin{proof}
Because $\left\{ v_{i}\right\} _{i=1}^{n}$ are in the vertex directions
of a regular simplex, $\langle v_{i},v_{j}\rangle=-\frac{1}{n-1}$
if $i\neq j$. We have 
\begin{eqnarray*}
\langle\sum_{i\in I}v_{i},\sum_{i\in I}v_{i}\rangle & = & \sum_{i,j\in I}\left(-\frac{1}{n-1}+\delta_{ij}(1+\frac{1}{n-1})\right)\\
 & = & -\frac{k^{2}}{n-1}+k(1+\frac{1}{n-1})\\
 & = & k(1-\frac{k-1}{n-1}).
\end{eqnarray*}

Thus, 
\[
v_{I}=\frac{c_{n,k}}{\sqrt{k}}\sum_{i\in I}v_{i},
\]
where $c_{n,k}=\frac{1}{\sqrt{1-\frac{k-1}{n-1}}}$. Suppose $1\le k\le\frac{n+3}{4}$,
then $1\le c_{n,k}\le\sqrt{\frac{4}{3}}$.

Let $J\subset[n]$ with $|J|=k$. Then, 
\begin{eqnarray*}
\frac{k}{c_{n,k}^{2}}\langle v_{I},v_{J}\rangle & = & \sum_{i\in I}\sum_{j\in J}\langle v_{i},v_{j}\rangle\\
 & = & \sum_{i\in I}\sum_{j\in J}\left(-\frac{1}{n-1}+(1+\frac{1}{n-1})\delta_{ij}\right)\\
 & = & -\frac{k^{2}}{n-1}+|I\cap J|(1+\frac{1}{n-1})\\
 & \le & |I\cap J|(1+\frac{1}{n-1}).
\end{eqnarray*}
Suppose $|I\cap J|<\frac{k}{2}$ and $n$ is large enough. Then, 
\[
\langle v_{I},v_{J}\rangle\le\frac{c_{n,k}^{2}}{2}(1+\frac{1}{n-1})\le\frac{2}{3}(1+\frac{1}{n-1})<\frac{3}{4}.
\]
\end{proof}
We are now ready to prove the main theorem. 
\begin{proof}[Proof of Theorem \ref{thm:MAIN} ]
For a sufficiently large $n$, we fix $100\le k\le\frac{n}{2e^{8}}$.
Then, $n$ and $k$ satisfy the assumptions in Proposition \ref{Tailbound}
and Proposition \ref{innerproduct}. Let $m\subset\mathbb{N}$ be
an integer that we will specify later. Let $I_{1},\,I_{2},\dots,\,I_{m}$
be chosen independently and uniformly from $\{W\subset[n]\,:\,|W|=k\}$.
Let $u_{1}$ be the vector $\beta$ described in Proposition \ref{Structure}.
In particular, $S:=S^{n-1}\cap\{x\,:\,\iprod{u_{1}}x=0\}$. We adapt
the definition of $\theta^{\uparrow}$ and $\theta^{\downarrow}$
for $\theta\in S$. Let 
\[
K:=\Delta_{n}\cap(\cap_{i=1}^{m}\{x\in\mathbb{R}^{n}\,:\,\langle x,u_{I_{i}}^{\downarrow}\rangle\le1\}).
\]
By Proposition \ref{Tailbound}, we have $\mathbb{P}(|I_{i}\cap I_{j}|\ge\frac{k}{2})\le(\frac{2k}{n})^{\frac{k}{5}}$.
A union bound argument shows that 
\begin{eqnarray*}
\mathbb{P}(\exists1\le i<j\le m\text{ such that }|I_{i}\cap I_{j}|\ge\frac{k}{2})\le{m \choose 2}(\frac{2k}{n})^{\frac{k}{5}}<m^{2}(\frac{2k}{n})^{\frac{k}{5}}.
\end{eqnarray*}
By setting $m=(\frac{n}{2k})^{k/20}$, we have 
\begin{equation}
\mathbb{P}(\exists1\le i<j\le m\text{ such that }|I_{i}\cap I_{j}|\ge\frac{k}{2})\le(\frac{2k}{n})^{\frac{k}{10}}.
\end{equation}
Since the probability is strictly smaller than $1$, there exists
a sample such that $|I_{i}\cap I_{j}|<\frac{k}{2}$ for all $1\le i<j\le m$.
From now on, we fix such a sample.

We want to apply Proposition \ref{IdeaOfCounterExample} with $L=\Delta_{n}$,
$y_{i}=v_{I_{i}}^{\uparrow}$, and $x_{i}=\frac{1}{\langle v_{I_{i}}^{\downarrow},\,v_{I_{i}}^{\uparrow}\rangle}v_{I_{i}}^{\downarrow}=C_{0}v_{I_{i}}^{\downarrow}$,
where $C_{0}$ is the constant defined in Proposition \ref{Structure}.
We start verifying the assumptions that are described in Proposition
\ref{IdeaOfCounterExample}.

First, $\Delta_{n}^{\circ}=\text{conv}\{u_{1},\cdots,u_{n+1}\}$.
Because $\{c_{n}nv_{i}\}_{i=1}^{m}\subset\Delta_{n}^{'}\subset\Delta_{n}$,
$\langle c_{n}nv_{i},u_{j}\rangle\le1$ for $i\in[n]$ and $j\in[n+1]$.
Thus, for any $I\subset[n]$ with $|I|=k$ and $j\in[n+1]$, 
\[
\langle v_{I},u_{j}\rangle=\frac{c_{n,k}}{\sqrt{k}}\sum_{i\in I}\langle v_{i},u_{j}\rangle\le\frac{c_{n,k}}{\sqrt{k}}k\frac{1}{c_{n}n}\le\frac{c_{n,k}\sqrt{k}}{c_{n}n}.
\]
Since $\langle-u_{i},u_{j}\rangle=-(1+\frac{1}{n})\delta_{ij}+\frac{1}{n}\le\frac{1}{n}$,

\begin{eqnarray*}
\langle v_{I}^{\downarrow},u_{j}\rangle & = & \frac{1}{8}\langle-u_{1},u_{j}\rangle+\sqrt{1-\left(\frac{1}{8}\right)^{2}}\langle v_{I},u_{j}\rangle\\
 & \le & \frac{1}{8}\frac{1}{n}+\sqrt{1-\left(\frac{1}{8}\right)^{2}}\frac{c_{n,k}\sqrt{k}}{c_{n}n}.
\end{eqnarray*}
Since $\frac{n}{2e^{8}}\ge k\ge1$ , $1\le c_{n,k}\le\sqrt{\frac{4}{3}}$
and $c_{n}\rightarrow1$ as $n\rightarrow+\infty$, 
\begin{align*}
\frac{1}{8}\frac{1}{n}+\sqrt{1-\left(\frac{1}{8}\right)^{2}}\frac{c_{n,k}\sqrt{k}}{c_{n}n} & \le\frac{1}{n}+\frac{1}{c_{n}}\sqrt{\frac{4}{3}}\frac{\sqrt{k}}{n}\\
 & \le\left(\frac{1}{c_{n}}\sqrt{\frac{4}{3}}+1\right)\frac{\sqrt{k}}{n}\\
 & \le3\frac{\sqrt{k}}{n}.
\end{align*}

We conclude that

\begin{align*}
\langle C_{0}v_{I}^{\downarrow},u_{j}\rangle & \le3C_{0}\frac{\sqrt{k}}{n}.
\end{align*}
Every $y\in\Delta_{n}^{\circ}$ can be written as a convex combination
of $\left\{ u_{i}\right\} _{i=1}^{n+1}$. Thus, the same inequality
holds: 
\begin{equation}
\langle C_{0}v_{I_{i}}^{\downarrow},\,y\rangle\le3C_{0}\frac{\sqrt{k}}{n}\qquad\forall y\in\Delta_{n}^{\circ}.
\end{equation}

Let $i,j\in[m+1]$ with $i\neq j$. Applying Proposition \ref{innerproduct},
we obtain $\langle v_{I_{i}},v_{I_{j}}\rangle<\frac{3}{4}$ since
$\left|I_{i}\cap I_{j}\right|<\frac{3}{4}$. According to Proposition
\ref{Structure}, we obtain $\langle C_{0}v_{I_{i}}^{\downarrow},v_{I_{j}}^{\uparrow}\rangle<0$.
In the case $i=j$, by definition we have $\langle C_{0}v_{I_{i}}^{\downarrow},v_{I_{j}}^{\uparrow}\rangle=1$.
To summarize, 
\begin{eqnarray}
\langle c_{1}v_{I_{i}}^{\downarrow},y\rangle\left\{ \begin{array}{cc}
=1 & \text{ if \ensuremath{y=v_{I_{i}}^{\uparrow}}, }\\
\le0 & \text{ if \ensuremath{y=v_{I_{j}}^{\uparrow}} with \ensuremath{j\neq i},}\\
\le3C_{0}\frac{\sqrt{k}}{n} & \text{ if \ensuremath{y\in\Delta_{n}^{\circ}}. }
\end{array}\right.
\end{eqnarray}
Now, we can apply Proposition \ref{IdeaOfCounterExample} with $m=(\frac{n}{2k})^{k/10}$,
$y_{i}=u_{I_{i}}^{\uparrow}$, $x_{i}=C_{0}u_{I_{i}}^{\downarrow}$,
$L=\Delta_{n}$ and $R=\frac{n}{6C_{0}\sqrt{k}}$ with the condition
that $100\le k\le\frac{n}{2e^{8}}$. Expresssing these relations in
terms of $R$ and $n$, we have 
\[
k=(\frac{n}{6C_{0}R})^{2},\qquad m=\left(\frac{18C_{0}^{2}R^{2}}{n}\right)^{(\frac{n}{6C_{0}R})^{2}/20},\quad\text{and }\frac{\sqrt{2}e^{4}}{6C_{0}}\sqrt{n}\le R\le\frac{n}{60C_{0}}.
\]
The lower bound of the facets of the polytope $P$ in Proposition
\ref{IdeaOfCounterExample} is $\frac{m}{2R}$. To simplify $m$,
we further restrict $R>\sqrt{en}$ so that $\frac{R^{2}}{n}>e$. Since
$C_{0}>1$, we have $\log\left(18C_{0}^{2}\right)>0$. Thus, 
\[
\log(\frac{18C_{0}^{2}R^{2}}{n})=\log(18C_{0}^{2})+\log(\frac{R^{2}}{n})\ge\log(\frac{R^{2}}{n})>1>0.
\]
Then, 
\begin{eqnarray*}
\frac{m}{2R} & = & \exp(-\log(2R)+\log(\frac{18C_{0}^{2}R^{2}}{n})\frac{n^{2}}{720C_{0}^{2}R^{2}})\\
 & \ge & \exp(-\log(2R)+\frac{1}{720C_{0}^{2}}\log(\frac{R^{2}}{n})\frac{n^{2}}{R^{2}})\\
 & \ge & \exp(-\log(2n)+C'\log(\frac{R^{2}}{n})\frac{n^{2}}{R^{2}}),
\end{eqnarray*}
when $C'=\frac{1}{720C_{0}^{2}}>0$. In order to take care of the
$\log(2n)$ term we need to check the last term carefully. First,
\[
\frac{d}{dR}\log(\frac{R^{2}}{n})\frac{n^{2}}{R^{2}}=-\frac{2n^{2}}{R^{3}}\left(\log(\frac{R^{2}}{n})-1\right)<0
\]
for $R>\sqrt{en}$. Let $c_{1}=\min\left\{ 1,\,\sqrt{\frac{C'}{8}},\,\frac{1}{60C_{0}}\right\} $.
Suppose $R=c_{1}n$, we have 
\[
C'\log(\frac{R^{2}}{n})\frac{n^{2}}{R^{2}}=\frac{C'}{c_{1}^{2}}\log(n)+\frac{C'}{c_{1}^{2}}\log(c_{1}{}^{2})>4\log\left(n\right),
\]

where the last inequality holds for large $n$. Together with $2\log(n)\ge\log(2n)$,
\begin{align*}
\frac{1}{2}C'\log(\frac{R^{2}}{n})\frac{n^{2}}{R^{2}} & \ge\log(2n)
\end{align*}
when $R=c_{1}n$. Since $\log(\frac{R^{2}}{n})\frac{n}{R^{2}}$ is
a decreasing function for $R>\sqrt{en}$, $\frac{1}{2}C'\log(\frac{R^{2}}{n})\frac{n}{R^{2}}\ge\log(2n)$
for $\sqrt{en}<R<c_{1}n$. Therefore, we conclude that for $c_{0}\sqrt{n}<R<c_{1}n$,
\[
\frac{m}{2R}\ge\exp(C\log(\frac{R^{2}}{n})\frac{n^{2}}{R^{2}})
\]
where $C>0$ is an universal constants. Therefore, for $c_{0}\sqrt{n}\le R\le c_{1}n$,
there exists a convex body $K\subset\mathbb{R}^{n}$ in John's position
such that no polytope $P$ that has less than $\exp(C\log(\frac{R^{2}}{n})\frac{n^{2}}{R^{2}})$
facets satisfies 
\[
K\subset P\subset RK.
\]
\end{proof}

\section{Upper bound for small $R$ \label{sec:UpperBound}}
\begin{prop}
Suppose $B_{2}^{n}\subset K\subset RB_{2}^{n}$. For $0<\delta<1$,
there exists a polytope $P_{\delta}$ with no more than $\exp(c\log(\frac{2R}{\delta})n)$
facets such that $(1-\delta)P_{\delta}\subset K\subset P_{\delta}$.
Here $c>0$ is a universal constant. 
\end{prop}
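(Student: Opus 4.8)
The plan is to use a standard $\delta$-net argument on the dual side. Since $B_2^n \subset K \subset RB_2^n$, the polar satisfies $\frac{1}{R}B_2^n \subset K^\circ \subset B_2^n$. I would take a $\delta$-net $\mathcal{N}$ of the sphere $S^{n-1}$ with respect to the Euclidean metric; by the standard volumetric bound one can choose $|\mathcal{N}| \le \left(\frac{3}{\delta}\right)^n$, or more generously $\exp(c\log(\tfrac{2R}{\delta})n)$. For each $u \in \mathcal{N}$, let $h_K(u)$ be the support function value, and define
\[
P_\delta := \left\{x \in \R^n \,:\, \iprod{x}{u} \le h_K(u) \ \ \forall u \in \mathcal{N}\right\}.
\]
Since $K \subset \{x : \iprod xu \le h_K(u)\}$ for every $u$, the inclusion $K \subset P_\delta$ is immediate, and $P_\delta$ has at most $|\mathcal{N}|$ facets.

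The substance is the reverse inclusion $(1-\delta)P_\delta \subset K$. Equivalently, I would show $h_{P_\delta}(\xi) \le \frac{1}{1-\delta} h_K(\xi)$ for every $\xi \in S^{n-1}$ (actually it is cleaner to bound the radial/gauge function, but support functions work too). Fix $\xi \in S^{n-1}$ and pick $u \in \mathcal{N}$ with $|u - \xi| \le \delta$. For any $x \in P_\delta$, write $\iprod{x}{\xi} = \iprod{x}{u} + \iprod{x}{\xi - u} \le h_K(u) + |x|\,\delta \le h_K(u) + R\delta$, using $P_\delta \subset RB_2^n$ — wait, I need this bound first. To get $P_\delta \subset c RB_2^n$: for any $\xi \in S^{n-1}$, picking the net point $u$ close to $\xi$, and noting $h_K(u) \le R$ (since $K \subset RB_2^n$), one gets $h_{P_\delta}(\xi) \le R + R\delta \cdot (\text{something})$, which after rearranging gives $h_{P_\delta} \le \frac{R}{1-\delta} \le 2R$ for $\delta < 1/2$. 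So $P_\delta \subset 2RB_2^n$. Now iterate the estimate: for $x \in P_\delta$,
\[
\iprod{x}{\xi} \le h_K(u) + |x|\cdot|\xi - u| \le h_K(\xi) + R\delta + 2R\delta \le h_K(\xi) + 3R\delta,
\]
using $h_K(u) \le h_K(\xi) + |u-\xi| \cdot \sup_{y\in K}|y| \le h_K(\xi) + R\delta$ and $|x| \le 2R$. Since $B_2^n \subset K$ gives $h_K(\xi) \ge 1$, this yields $h_{P_\delta}(\xi) \le h_K(\xi)(1 + 3R\delta)$, hence $\frac{1}{1+3R\delta}P_\delta \subset K$. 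Replacing $\delta$ by $\delta' = \delta/(6R)$ in the net construction turns the factor into $\ge 1 - \delta$ while keeping the cardinality at $\exp(c\log(\tfrac{2R}{\delta})n)$; the bookkeeping here is the only delicate part, and it is routine.

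The main obstacle, such as it is, is organizing the two-step estimate so that the a priori diameter bound on $P_\delta$ (needed to control $|x|$ in the term $|x|\,|\xi-u|$) does not enter circularly. Starting from the crude $h_K \le R$ bound to get $P_\delta \subset 2RB_2^n$, and only then refining with $h_K(u) \le h_K(\xi) + R\delta$, resolves this cleanly. Everything else — the net cardinality bound and the rescaling of $\delta$ — is standard.
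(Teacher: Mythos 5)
Your proposal is correct and follows essentially the same route as the paper: a net on $S^{n-1}$ of mesh comparable to $\delta/R$, the polytope $P_\delta$ cut out by the support hyperplanes $\langle x,u\rangle\le h_K(u)$ at the net points, and the $R$-Lipschitz continuity of $h_K$ together with $h_K\ge 1$ to get the reverse inclusion. The only difference is cosmetic: the paper checks $(1-\delta)P_\delta\subset K$ by showing every boundary point of $K$ violates a facet inequality of $(1-\delta)P_\delta$ (using the touching direction at that point), whereas you bound $h_{P_\delta}\le(1+3R\delta)h_K$ after a preliminary diameter bound on $P_\delta$ and then rescale the mesh, which is routine bookkeeping as you note.
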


\begin{proof}
Let $B_{2}^{n}\subset K\subset RB_{2}^{n}$ be a convex body. Let
$h:S^{n-1}\rightarrow\left[1,\,R\right]$ be the support function
of $K$. Observe that $h$ is also the gauge function of $\frac{1}{R}B_{2}^{n}\subset K^{\circ}\subset B_{2}^{n}$
. Thus, $h$ is a $R-$Lipschitz continuous function.

Let $\mathscr{N}$ be a $\frac{\delta}{2R}$-net of $S^{n-1}$. We
define 
\begin{align*}
P_{\delta}: & =\left\{ x\in\R^{n}\,:\,\forall\alpha\in\mathscr{N}\,\iprod{\alpha}x\le h\left(\alpha\right)\right\} .
\end{align*}

Thus, $P_{\delta}$ is a polytope with at most $\left|\mathscr{N}\right|$
facets. Recall that by a volumetric argument, the size of a $\eps-$net
on $S^{n-1}$ is bounded by $\exp\left(c\log\left(\frac{1}{\eps}\right)n\right)$
for an universal constant $c>0$. Hence, $P_{\delta}$ has no more
than $\exp\left(c\log\left(\frac{2R}{\delta}\right)n\right)$ number
of facets.

Since 
\[
K=\left\{ x\in\R^{n}\,:\,\forall\alpha\in S^{n-1}\,\iprod{\alpha}x\le h\left(\alpha\right)\right\} ,
\]
we have $K\subset P_{\delta}$. Observe that 
\[
\left(1-\delta\right)P_{\delta}=\left\{ x\in\R^{n}\,:\,\forall\alpha\in\mathscr{N}\,\iprod{\alpha}x\le\left(1-\delta\right)h\left(\alpha\right)\right\} .
\]

For $x\in\partial K$, there exists $\theta$ such that $\iprod x{\theta}=h\text{\ensuremath{\left(\theta\right)}}$.
We pick $\alpha\in\mathscr{N}$ such that $\norm{\alpha-\theta}<\frac{\delta}{2R}$.
Then, 

\begin{align}
\iprod x{\alpha} & =\iprod x{\theta}+\iprod x{\alpha-\theta}\ge h\left(\theta\right)-\left|x\right|\left|\alpha-\theta\right|\nonumber \\
 & >h\left(\theta\right)-R\frac{\delta}{2R}=h\left(\theta\right)-\frac{\delta}{2}.\label{eq:innerproduct}
\end{align}

Since $h$ is a $R-$Lipschitz continuous function, we have 
\[
h\left(\theta\right)\ge h\left(\alpha\right)-\frac{\delta}{2}.
\]

Together with $h\left(\alpha\right)\ge1$, the equation (\ref{eq:innerproduct})
becomes 
\begin{align*}
\iprod x{\alpha} & >h\left(\alpha\right)-\delta\ge\left(1-\delta\right)h\left(\alpha\right).
\end{align*}

Thus, $x\notin\left(1-\delta\right)P_{\delta}$. In paricular, we
conclude the radial function of $K$ is always greater than the radial
funciton of $\left(1-\delta\right)P_{\delta}$. Therefore, we have
$\left(1-\delta\right)P_{\delta}\subset K$. 
\end{proof}

\section*{Acknowledgement}

I am grateful to my Advisor, Mark Rudelson for fruitful discussions.
I am also grateful to the anonymous reviewer for his/her suggestions.

\end{document}